\newcommand{\upcite}[1]{$^{\mbox{\scriptsize \cite{#1}}}$}
\newtheorem{lemma}{Lemma}[section]
\newtheorem{theorem}{Theorem}[section]
\begin{document}

\begin{frontmatter}

\title{Convergence analysis of Jacobi spectral collocation methods for weakly singular nonlocal diffusion equations with volume constraints}

\author[mymainaddress]{Jiashu Lu}
\author[mymainaddress]{Mengna Yang}

\author[mymainaddress]{Yufeng Nie\corref{mycorrespondingauthor}}
\cortext[mycorrespondingauthor]{Corresponding author}
\ead{yfnie@nwpu.edu.cn}

\address[mymainaddress]{Xi'an Key Laboratory of Scientific Computation and Applied Statistics,
Northwestern Polytechnical University,
Xi'an, Shaanxi 710129, China. }

\begin{abstract}
This paper considers efficient spectral solutions for weakly singular nonlocal diffusion equations with Dirichlet-type volume constraints. The equation we consider contains an integral operator that typically has a singularity at the midpoint of the integral domain, and the approximation of the integral operator is one of the essential difficulties in solving nonlocal equations. To overcome this problem, two-sided Jacobi spectral quadrature rules are proposed to develop a Jacobi spectral collocation method for nonlocal diffusion equations. A rigorous convergence analysis of the proposed method with the $L^\infty$ norm is presented, and we further prove that the Jacobi collocation solution converges to its corresponding local limit as nonlocal interactions vanish. Numerical examples are given to verify the theoretical results.
\end{abstract}

\begin{keyword}
nonlocal diffusion equations \sep spectral collocation methods \sep weakly singular kernel \sep spectral accuracy \sep Jacobi quadrature
\end{keyword}

\end{frontmatter}

\section{Introduction}
Nonlocal models with volume constraints have received much attention in the last two decades due to their capacity to represent physical phenomena that cannot be effectively characterized by partial differential equation (PDE) models. Unlike local PDE models, nonlocal models replace the usual spatial differential operators with integral operators. Thus, the usual continuity and smoothness requirements for displacements are eliminated, which makes it possible to describe discontinuities, such as crack propagation\upcite{bobaru2015cracks} and the general Markov jump processes in bounded domains\upcite{du2014nonlocal}.

In this paper, we consider weakly singular nonlocal diffusion equations with volume constraints\upcite{2012Analysis}. Let $\Omega \subset \mathbb R$ be a bounded, open domain in $\mathbb R$. By defining $\delta$ as the scope of the nonlocal interaction, the corresponding interaction domain is then defined as
\begin{equation*}
\Omega_c=\{x \in \mathbb R\backslash\Omega:\text{dist}(x,\partial\Omega)\le\delta\}.
\end{equation*}
Then, nonlocal diffusion equations with Dirichlet-type volume constraints can be formulated as follows:
\begin{equation}\label{modelequation}
	\left\{\begin{aligned}
		&-\mathcal{L}_\delta u(x)=f(x),&x\in\Omega,\;\\
		&u(x)=g(x),&x\in\Omega_c,
		\end{aligned}\right.
\end{equation}
where $f$ and $g$ are the prescribed functions and $\mathcal{L}_\delta $ denotes the weakly singular nonlocal diffusion operator, which is defined as
\begin{equation}\label{nonlocaloperator}
	\mathcal{L}_\delta u(x)=\int_{x-\delta}^{x+\delta}\gamma_\delta(x,y)\frac{1}{|y-x|^\mu}\left(u(y)-u(x)\right)dy,
\end{equation}
where $0<\mu<1$ and $\gamma_\delta$ denote nonnegative and symmetric radial-type kernel functions.

Although nonlocal models can provide better modeling capabilities than traditional PDE models, they often lead to more computational difficulties when usual discretization methods are used. Such difficulties mainly come from the following two parts: the approximation of a singular nonlocal integral, and the numerical solution that yielded from denser discrete equations compared with the solutions to discrete PDE models. To date, many works have been performed to overcome one of the above difficulties. For example, a fast convolution-based method\upcite{jafarzadeh2021fast} reduces the computational cost of a singular nonlocal integral, a fast Fourier transform (FFT)-type method\upcite{2013AN,WANG201419} achieves high computing efficiency in the process of solving discrete equations, a localized radial bias function (RBF) collocation method significantly reduces the condition number of a nonlocal stiffness matrix\upcite{lu2021collocation}, and a proper orthogonal decomposition (POD)-based fast algorithm\upcite{shangyuan2020} accelerates the computation of dense discrete equations in time iterations. However, limited work has been done to overcome both of the above two difficulties, and we fill this gap by developing a Jacobi spectral collocation method in this paper.

Spectral methods have now become one of the most popularly used methods for the discretization of spatial variables in solving PDEs\upcite{shen2011spectral,jia2020numerical} since they can provide very accurate approximations for sufficiently smooth solutions, and they have been successfully used to solve integral equations, e.g., Volterra integral equations\upcite{xie2012convergence,samadi2012spectral,yao2022hybrid}, Fredholm integral equations\upcite{panigrahi2019legendre,benyoussef2020efficient} and fractional differential equations\upcite{wang2018spectral,gu2021spectral,guo2021linearized}. In \cite{tian2020spectral}, Tian et al. developed a Legendre spectral method for nonlocal diffusion equations that can achieve a fixed accuracy with fewer unknowns; therefore, it alleviates the latter part of the above computational difficulties by significantly reducing the degrees of freedom of the discrete equations. In light of the above work, we develop a Jacobi spectral collocation method for weakly singular nonlocal diffusion equations with rigorous convergence analyses. Inspired by \cite{yang2021using}, two-sided Jacobi spectral quadrature rules are proposed to accurately approximate singular nonlocal integrals. Combined with the above ideas, we completely overcome the two computational difficulties brought by the nonlocality of a nonlocal operator. Furthermore, we provide relevant theoretical analyses, which show that the numerical solution of a nonlocal diffusion equation converges to the correct local limit as $\delta\rightarrow 0$ and $N\rightarrow\infty$. To verify the theoretical result, numerical experiments are also presented for nonlocal diffusion equations with volume constraints.

The layout of the paper is as follows. In Section 2, the Jacobi spectral collocation methods are presented for weakly singular nonlocal diffusion equations. Some useful lemmas are provided in Section 3 for convergence and asymptotic compatibility analysis. An error analysis using the $L^\infty$ norm is given in Section 4, and we present an asymptotic compatibility analysis using the $L^2$ norm in Section 5. Numerical experiments are carried out in Section 6 to justify the theoretical results and demonstrate the efficiency of the proposed methods. In Section 7, we close the paper with some conclusions.

\section{Jacobi-collocation methods}
Without loss of generality, we choose $\Omega=I:=(-1,1)$ for convenience, and we note that one can treat more general domains through a simple linear transformation. Here, $\omega^{\alpha,\beta}(x)=(1-x)^\alpha(1+x)^\beta$ is the Jacobi weight function with $\alpha, \beta>-1$ and $x\in I $.\\
Let $L^2_{\omega^{\alpha,\beta}}(I)$ be a weighted Hilbert space
\begin{equation*}
	L^2_{\omega^{\alpha,\beta}}(I)=\left\{v:v\text{ is measurable and } ||v||_{\omega^{\alpha,\beta}}<\infty\right\},
\end{equation*}
equipped with the norm
\begin{equation*}
	||v||_{\omega^{\alpha,\beta}}=\left(\int_{-1}^1|v(x)|^2\omega^{\alpha,\beta}(x)dx\right)^{\frac{1}{2}},
\end{equation*}
and the weighted inner product
\begin{equation}\label{innerproduct}
	(u,v)_{\omega^{\alpha,\beta}}=\int_{-1}^1u(x)v(x)\omega^{\alpha,\beta}(x)dx.
\end{equation}

Before presenting the Jacobi collocation scheme for the nonlocal diffusion models, we rewrite the model equation in \eqref{modelequation} as
\begin{equation}\label{modelequation2}
\left\{\begin{aligned}
	&C_\delta u(x)-\int_{x-\delta}^{x+\delta}\gamma_\delta(x,y)\frac{1}{|y-x|^\mu}u(y)dy=f(x),&x\in I,\;\\
	&u(x)=g(x),&x\in I_c.
\end{aligned}\right.
\end{equation}
where
\begin{equation}\label{cdelta}
	C_\delta=\int_{x-\delta}^{x+\delta}\gamma_\delta(x,y)\frac{1}{|y-x|^\mu}dy=\int_{-\delta}^{\delta}\gamma_\delta(0,y)\frac{1}{|y|^\mu}dy
\end{equation}

To approximate the nonlocal integral in \eqref{modelequation2}, we introduce two-sided Jacobi spectral quadrature rules. First, by denoting
\begin{equation}\label{operators}
	S_1u(x)=\int_{x-\delta}^x\gamma_\delta(x,s_1)\frac{1}{(x-s_1)^\mu}u(s_1)ds_1,\quad S_2u(x)=\int_x^{x+\delta}\gamma_\delta(x,s_2)\frac{1}{(s_2-x)^\mu}u(s_2)ds_2,\quad x\in I,
\end{equation}
and employing the following change of variables:
\begin{equation}\label{trans}
	s_1(x,\theta)=\frac{2x-\delta}{2}+\frac{\delta}{2}\theta,\quad s_2(x,\theta)=\frac{2x+\delta}{2}+\frac{\delta}{2}\theta,\quad\theta\in[-1,1],
\end{equation}
one can rewrite \eqref{modelequation2} as
\begin{equation}\label{modelequation3}\small
\begin{aligned}
	&C_\delta u(x)-S_1u(x)-S_2u(x)\\
	&=C_\delta u(x)-\left(\frac{\delta}{2}\right)^{1-\mu}\left(\int_{-1}^{1}(1-\theta)^{-\mu}\gamma_\delta(x,s_1(x,\theta))u(s_1(x,\theta))d\theta+\int_{-1}^{1}(1+\theta)^{-\mu}\gamma_\delta(x,s_2(x,\theta))u(s_2(x,\theta))d\theta\right)\\
	&=f(x).
\end{aligned}
\end{equation}

For a given positive integer $N$, we denote the collocation points by $\{x_i\}_{i=1}^{N+1}$, which is the set of $N+1$ Jacobi-Gauss-Lobatto points\upcite{shen2011 spectral} corresponding to the weight function $\omega^{\alpha,\beta}(x)$, and we let $\mathbb P_N$ be the space of all polynomials with degrees not exceeding $N$. Then, the two-sided Jacobi spectral quadrature rules are given to approximate $S_1(x_i)$ and $S_2(x_i)$ using Jacobi spectral quadratures with different weight functions.

Let $\{\theta^1_j,\omega^1_j\}_{j=1}^M$ and $\{\theta^2_j,\omega^2_j\}_{j=1}^M$ be the sets of Jacobi-Gauss points and weights with weight functions $\omega^{-\mu,0}(x)$ and $\omega^{0,-\mu}$, respectively. For $\xi=1,2$, we set
\begin{equation*}
\begin{aligned}
&H^\xi_\text{in}(x_i)=\{j=1,...,M:s_\xi(x_i,\theta^\xi_j)\in I\},\\
&H^\xi_\text{out}(x_i)=\{j=1,...,M:s_\xi(x_i,\theta^\xi_j)\in I_c\},
\end{aligned}
\end{equation*}
Then, the Jacobi collocation scheme for \eqref{modelequation3} becomes a process of finding the approximate solution $u_N\in\mathbb P_N$ such that
\begin{equation}\label{collocationscheme}\small
\begin{aligned}
&C_\delta u_N(x_i)-\left(\frac{\delta}{2}\right)^{1-\mu}\left(\sum_{j\in H^1_\text{in}(x_i)}\omega_j^1\gamma_\delta(x_i,s_1(x_i,\theta_j^1))u_N(s_1(x_i,\theta_j^1))+\sum_{j\in H^2_\text{in}(x_i)}\omega_j^2\gamma_\delta(x_i,s_2(x_i,\theta_j^2))u_N(s_2(x_i,\theta_j^2))\right)\\
&=f(x_i)+\left(\frac{\delta}{2}\right)^{1-\mu}\left(\sum_{j\in H^1_\text{out}(x_i)}\omega_j^1\gamma_\delta(x_i,s_1(x_i,\theta_j^1))g(s_1(x_i,\theta_j^1))+\sum_{j\in H^2_\text{out}(x_i)}\omega_j^2\gamma_\delta(x_i,s_2(x_i,\theta_j^2))g(s_2(x_i,\theta_j^2))\right).
\end{aligned}	
\end{equation}
In this paper, the case $M=N$ is selected for convenience of analysis, and one can treat the other cases similarly. Then, the Lagrange interpolation polynomial $I_N^{\alpha,\beta}u\in\mathbb P_N$ corresponding to function $u$ is defined as
\begin{equation*}
	I_N^{\alpha,\beta}u(x_i)=u(x_i),\quad 0\le i\le N.
\end{equation*}
We seek an approximate solution $u_N(x)$ of the form
\begin{equation}\label{interpolation}
	u_N(x)=I^{\alpha,\beta}_Nu(x)=\sum_{k=1}^{N+1}u_kh_k(x),
\end{equation}
where $\{h_k(x)\}_{k=1}^{N+1}$ is the Lagrange interpolation basis function.

Substituting \eqref{interpolation} into \eqref{collocationscheme} leads to
\begin{equation}\label{fulldiscrete}\small
\begin{aligned}
&C_\delta u_i-\left(\frac{\delta}{2}\right)^{1-\mu}\sum_{k=1}^{N+1}u_k\left(\sum_{j\in H^1_\text{in}(x_i)}\omega_j^1\gamma_\delta(x_i,s_1(x_i,\theta_j^1))h_k(s_1(x_i,\theta_j^1))+\sum_{j\in H^2_\text{in}(x_i)}\omega_j^2\gamma_\delta(x_i,s_2(x_i,\theta_j^2))h_k(s_2(x_i,\theta_j^2))\right)\\
&=f(x_i)+\left(\frac{\delta}{2}\right)^{1-\mu}\left(\sum_{j\in H^1_\text{out}(x_i)}\omega_j^1\gamma_\delta(x_i,s_1(x_i,\theta_j^1))g(s_1(x_i,\theta_j^1))+\sum_{j\in H^2_\text{out}(x_i)}\omega_j^2\gamma_\delta(x_i,s_2(x_i,\theta_j^2))g(s_2(x_i,\theta_j^2))\right).
\end{aligned}	
\end{equation}
By denoting $\mathbf u_N=(u_1,u_2,...,u_{N+1})$, we can rewrite \eqref{fulldiscrete} in the following matrix form:
\begin{equation}\label{discreteequation}
	\mathbf A \mathbf u_N = \mathbf F,
\end{equation}
where the entries of the matrix $\mathbf A$ can be computed as
\begin{equation*}
	(\mathbf A)_{i,k}=C_\delta\delta_{ki}-\left(\frac{\delta}{2}\right)^{1-\mu}\left(\sum_{j\in H^1_\text{in}(x_i)}\omega_j^1\gamma_\delta(x_i,s_1(x_i,\theta_j^1))h_k(s_1(x_i,\theta_j^1))+\sum_{j\in H^2_\text{in}(x_i)}\omega_j^2\gamma_\delta(x_i,s_2(x_i,\theta_j^2))h_k(s_2(x_i,\theta_j^2))\right),
\end{equation*}
where $\delta_{ki}$ is the Kronecker-Delta symbol, and the entries of the source vector $\mathbf F$ are
\begin{equation*}\small
	(\mathbf F)_i=f(x_i)+\left(\frac{\delta}{2}\right)^{1-\mu}\left(\sum_{j\in H^1_\text{out}(x_i)}\omega_j^1\gamma_\delta(x_i,s_1(x_i,\theta_j^1))g(s_1(x_i,\theta_j^1))+\sum_{j\in H^2_\text{out}(x_i)}\omega_j^2\gamma_\delta(x_i,s_2(x_i,\theta_j^2))g(s_2(x_i,\theta_j^2))\right).
\end{equation*}
Hence, one can obtain the coefficient vector $\mathbf u_N$ by solving \eqref{discreteequation}. The solvability theorem of the linear system in \eqref{fulldiscrete} is given in Section \ref{convergencesection}.
\section{Some useful lemmas}
To obtain the convergence analysis, we introduce some useful lemmas in this section. First, for a nonnegative integer $m$, we define
\begin{equation*}
	H^m_{\omega^{\alpha,\beta}}(I):=\{v:\partial^k_x v\in L^2_{\omega^{\alpha,\beta}}(I),0\le k\le m\},
\end{equation*}
which is equipped with the norm
\begin{equation*}
	||v||_{m,\omega^{\alpha,\beta}}=\left(\sum_{k=1}^{m}||\partial^k_x v||^2_{\omega^{\alpha,\beta}}\right)^{\frac{1}{2}}.
\end{equation*}
Then, it is convenient to introduce the following seminorm:
\begin{equation*}
	|v|_{\omega^{\alpha,\beta}}^{m;N}:=\left(\sum_{k=\min(m,N+1)}^m||\partial^k_xv||_{\omega^{\alpha,\beta}}^2\right)^{\frac{1}{2}}.
\end{equation*}
We further denote $H^r(I):=H^r_{\omega^{0,0}}(I)$ as a regular Hilbert space equipped with the norm $||v||_{m}$ and the seminorm $|v|^{m;N}$. Moreover, for any $u,v\in C[-1,1]$, we define a discrete inner product $\left<u,v\right>_{\omega^{\alpha,\beta},N}$ as
\begin{equation*}
	\left<u,v\right>_{N,\omega^{\alpha,\beta}}=\sum_{i=1}^{N+1}u(\theta_i)v(\theta_i)\omega_i,
\end{equation*}
where $\{\theta_i,\omega_i\}_{i=1}^{N+1}$ represents the Gauss points and Gauss weights corresponding to the weight functions $\omega^{\alpha,\beta}(x)$.
\begin{lemma}{\cite[Lemma 3.1, 3.2]{chen2013note}} If $v \in H^m_{\omega^{\alpha,\beta}}(I)$ for some $m\ge1$, $-1<\alpha,\beta<1$, and $\phi\in \mathbb P_N$, then the following estimate holds:\label{quadratureerror}\label{interpolationerror2}
\begin{equation*}
	\begin{aligned}
	\left|(v,\phi)_{\omega^{\alpha,\beta}}-\left<v,\phi\right>_{N,\omega^{\alpha,\beta}}\right|&\le c N^{-m} |v|_{\omega^{\alpha,\beta}}^{m;N}||\phi||_{\omega^{\alpha,\beta}},\\
	||v-I_N^{\alpha,\beta}v||_{\omega^{\alpha,\beta}}&\le c N^{-m}|v|_{\omega^{\alpha,\beta}}^{m;N},\\
	||v-I_N^{\alpha,\beta}v||_{1,\omega^{\alpha,\beta}}&\le c N^{1-m}|v|_{\omega^{\alpha,\beta}}^{m;N}.
	\end{aligned}
	\end{equation*}
\end{lemma}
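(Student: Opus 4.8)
The statement collects three classical Jacobi spectral-approximation estimates (it is quoted verbatim from \cite{chen2013note}, which in turn builds on the Jacobi-approximation results of Guo and Wang); the plan is to recover them from the standard Jacobi-series machinery, taking them in the order: orthogonal-projection bound $\Rightarrow$ interpolation bounds $\Rightarrow$ quadrature bound, since each reduces to the previous one. Throughout, $\pi_N^{\alpha,\beta}$ denotes the truncation at degree $N$ of the Jacobi expansion $v=\sum_{n\ge0}\hat v_n J_n^{\alpha,\beta}$, which is also the $L^2_{\omega^{\alpha,\beta}}(I)$-orthogonal projection onto $\mathbb{P}_N$.

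\emph{Step 1: projection estimates.} Since $\|v-\pi_N^{\alpha,\beta}v\|_{\omega^{\alpha,\beta}}^2=\sum_{n>N}|\hat v_n|^2\|J_n^{\alpha,\beta}\|_{\omega^{\alpha,\beta}}^2$, everything hinges on the decay of $\hat v_n$. Applying the differentiation identity $\partial_x J_n^{\alpha,\beta}=d_n J_{n-1}^{\alpha+1,\beta+1}$ repeatedly, one expresses $\hat v_n$ through the Jacobi coefficients of $\partial_x^m v$ in the $\omega^{\alpha+m,\beta+m}$-weighted basis; summing the resulting series yields $\|v-\pi_N^{\alpha,\beta}v\|_{\omega^{\alpha,\beta}}\le cN^{-m}\|\partial_x^m v\|_{\omega^{\alpha+m,\beta+m}}$ and, after one fewer integration by parts, $\|v-\pi_N^{\alpha,\beta}v\|_{1,\omega^{\alpha,\beta}}\le cN^{1-m}\|\partial_x^m v\|_{\omega^{\alpha+m,\beta+m}}$. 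Because $m\ge0$ gives the pointwise bound $\omega^{\alpha+m,\beta+m}\le 2^{2m}\,\omega^{\alpha,\beta}$ on $I$, the right-hand sides are dominated by $cN^{-m}|v|^{m;N}_{\omega^{\alpha,\beta}}$ and $cN^{1-m}|v|^{m;N}_{\omega^{\alpha,\beta}}$; only the top derivatives survive in the high-degree tail, which is why the seminorm $|\cdot|^{m;N}_{\omega^{\alpha,\beta}}$ rather than the full $m$-norm appears.

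\emph{Step 2: interpolation estimates.} As $I_N^{\alpha,\beta}$ fixes $\mathbb{P}_N$, write $v-I_N^{\alpha,\beta}v=(v-\pi_N^{\alpha,\beta}v)-I_N^{\alpha,\beta}(v-\pi_N^{\alpha,\beta}v)$, the first term being handled by Step 1. For the second I would use a refined stability estimate for nodal interpolation, $\|I_N^{\alpha,\beta}w\|_{\omega^{\alpha,\beta}}\le c\bigl(\|w\|_{\omega^{\alpha,\beta}}+N^{-1}\|\partial_x w\|_{\omega^{\alpha,\beta}}\bigr)$, valid for $-1<\alpha,\beta<1$. Taking $w=v-\pi_N^{\alpha,\beta}v$ and inserting the two bounds of Step 1 gives $\|I_N^{\alpha,\beta}(v-\pi_N^{\alpha,\beta}v)\|_{\omega^{\alpha,\beta}}\le c(N^{-m}+N^{-1}N^{1-m})|v|^{m;N}_{\omega^{\alpha,\beta}}\le cN^{-m}|v|^{m;N}_{\omega^{\alpha,\beta}}$, which together with the triangle inequality proves the $L^2_{\omega^{\alpha,\beta}}$-bound. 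For the $H^1$-bound one adds the inverse inequality $\|p\|_{1,\omega^{\alpha,\beta}}\le cN\|p\|_{\omega^{\alpha,\beta}}$ for $p\in\mathbb{P}_N$, applied to $p=I_N^{\alpha,\beta}(v-\pi_N^{\alpha,\beta}v)$, and combines it with $\|v-\pi_N^{\alpha,\beta}v\|_{1,\omega^{\alpha,\beta}}\le cN^{1-m}|v|^{m;N}_{\omega^{\alpha,\beta}}$.

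\emph{Step 3: quadrature estimate, and the main obstacle.} For $\phi\in\mathbb{P}_N$, since $v$ and $I_N^{\alpha,\beta}v$ agree at the $N+1$ Jacobi-Gauss nodes we have $\left<v,\phi\right>_{N,\omega^{\alpha,\beta}}=\left<I_N^{\alpha,\beta}v,\phi\right>_{N,\omega^{\alpha,\beta}}$, and since $I_N^{\alpha,\beta}v\cdot\phi\in\mathbb{P}_{2N}$ while the Jacobi-Gauss rule is exact on $\mathbb{P}_{2N+1}$, this equals $(I_N^{\alpha,\beta}v,\phi)_{\omega^{\alpha,\beta}}$; subtracting gives $(v,\phi)_{\omega^{\alpha,\beta}}-\left<v,\phi\right>_{N,\omega^{\alpha,\beta}}=(v-I_N^{\alpha,\beta}v,\phi)_{\omega^{\alpha,\beta}}$, and Cauchy–Schwarz with the $L^2$-interpolation bound from Step 2 finishes it. The genuinely delicate ingredients are the two weighted estimates that consume the hypothesis $-1<\alpha,\beta<1$: the projection bound with the \emph{same} weight on both sides (here the easy comparison $\omega^{\alpha+m,\beta+m}\le 2^{2m}\omega^{\alpha,\beta}$ suffices) and, more seriously, the $L^2_{\omega^{\alpha,\beta}}$-stability of nodal interpolation, where the Jacobi-Gauss weights concentrate near $\pm1$ precisely where $\omega^{\alpha,\beta}$ may be singular; controlling those endpoint contributions is exactly where $\alpha,\beta<1$ is used. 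Since all of this is carried out in \cite{chen2013note}, one may simply invoke it, but the above is the route I would reconstruct.
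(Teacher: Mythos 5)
The paper offers no proof of this statement at all: it is quoted verbatim from \cite{chen2013note}, so there is no internal argument to compare yours against. Your reconstruction follows the standard route by which such results are established in that literature (Jacobi projection error $\Rightarrow$ interpolation stability $\Rightarrow$ quadrature error), and Steps 2 and 3 are sound as sketched; in particular the identity $\left<v,\phi\right>_{N,\omega^{\alpha,\beta}}=\left<I_N^{\alpha,\beta}v,\phi\right>_{N,\omega^{\alpha,\beta}}=(I_N^{\alpha,\beta}v,\phi)_{\omega^{\alpha,\beta}}$, valid because the $(N+1)$-point Jacobi--Gauss rule is exact on $\mathbb P_{2N+1}\supset\mathbb P_{2N}$, is exactly how the quadrature bound reduces to the $L^2_{\omega^{\alpha,\beta}}$ interpolation bound via Cauchy--Schwarz. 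The one place where your sketch glosses over a genuine difficulty is the $H^1$ estimate in Step 1: the Jacobi expansion naturally controls $\|\partial_x(v-\pi_N^{\alpha,\beta}v)\|_{\omega^{\alpha+1,\beta+1}}$, and since $\omega^{\alpha+1,\beta+1}=(1-x^2)\,\omega^{\alpha,\beta}\le\omega^{\alpha,\beta}$ on $I$, that norm is \emph{weaker} than the $\omega^{\alpha,\beta}$-norm near $\pm1$; your comparison $\omega^{\alpha+m,\beta+m}\le 2^{2m}\omega^{\alpha,\beta}$ only helps on the right-hand side of the estimate and does not convert the left-hand side back to the un-incremented weight. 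Closing that step requires either the $H^1_{\omega^{\alpha,\beta}}$-orthogonal projection or a separate Hardy-type argument in which the restriction $-1<\alpha,\beta<1$ enters a second time --- this is precisely what the cited reference (building on the Guo--Wang Jacobi approximation theory) supplies, so invoking the citation, as both you and the paper do, is the right call; just be aware that ``one fewer integration by parts'' does not by itself deliver the $\|\cdot\|_{1,\omega^{\alpha,\beta}}$ bound.
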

Additionally, \cite{sohrabi2017convergence} gives the following lemma to estimate the $L^\infty$ norm of the interpolation operator $I^{\alpha,\beta}_N$:
\begin{lemma}{\cite[Lemma 1]{sohrabi2017convergence}}\label{interpolationerror}
Let $\{h_k(x)\}_{k=1}^{N+1}$ be the Lagrange interpolation basis functions associated with the Jacob-Gauss points corresponding to the weight function $\omega^{\alpha,\beta}(x)$ with $\alpha,\beta>-1$, and denote $\gamma=\max(\alpha,\beta)$. Then,
\begin{equation*}
			||I^{\alpha,\beta}_N||_\infty:=\max_{x\in I}\sum_{k=1}^{N+1}|h_k(x)|=\left\{
			\begin{aligned}
			&O(\ln N),&-1<\alpha,\beta\le-\frac{1}{2},\\
			&O(N^{\gamma+\frac{1}{2}}),&\textup{otherwise}.
	\end{aligned}\right.
	\end{equation*}
\end{lemma}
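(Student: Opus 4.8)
This is a classical bound on the Lebesgue constant of Jacobi--Gauss interpolation, so the plan is to proceed along the classical lines for such estimates (see \cite{sohrabi2017convergence}), which are based on sharp pointwise asymptotics for Jacobi polynomials. The first step is to make the Lebesgue function explicit: if $p_{N+1}^{\alpha,\beta}$ denotes the Jacobi polynomial of degree $N+1$ associated with $\omega^{\alpha,\beta}$, whose zeros are the Jacobi--Gauss nodes $\{x_k\}_{k=1}^{N+1}$, then
\[
h_k(x)=\frac{p_{N+1}^{\alpha,\beta}(x)}{(x-x_k)\,(p_{N+1}^{\alpha,\beta})'(x_k)},\qquad\text{so that}\qquad ||I_N^{\alpha,\beta}||_\infty=\max_{x\in I}\;\big|p_{N+1}^{\alpha,\beta}(x)\big|\sum_{k=1}^{N+1}\frac{1}{\big|(p_{N+1}^{\alpha,\beta})'(x_k)\big|\,|x-x_k|},
\]
and everything reduces to estimating this quantity uniformly in $x$.

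I would use three classical ingredients, whose asymptotics are available in Szeg\H{o}'s monograph on orthogonal polynomials. Writing $x=\cos\theta$ with $\theta\in[0,\pi]$: (i) in the central range $c/N\le\theta\le\pi-c/N$ one has the envelope bound $\big|p_{N+1}^{\alpha,\beta}(\cos\theta)\big|\lesssim N^{-1/2}\theta^{-\alpha-1/2}(\pi-\theta)^{-\beta-1/2}$, while in the endpoint layers $|1-x|\lesssim N^{-2}$ (resp.\ $|1+x|\lesssim N^{-2}$) the Bessel-regime bound $\big|p_{N+1}^{\alpha,\beta}(x)\big|\lesssim N^{\alpha}$ (resp.\ $\lesssim N^{\beta}$) holds; (ii) the nodes satisfy $\theta_k\asymp k/N$ with spacing $\theta_{k+1}-\theta_k\asymp N^{-1}$; (iii) the derivative at a node satisfies $\big|(p_{N+1}^{\alpha,\beta})'(x_k)\big|\asymp N^{1/2}\theta_k^{-\alpha-3/2}(\pi-\theta_k)^{-\beta-3/2}$ in the central range---equivalently, the Christoffel numbers are $\lambda_k\asymp N^{-1}\sqrt{1-x_k^2}\,\omega^{\alpha,\beta}(x_k)$---with the analogous Bessel-type behaviour in the layers. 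Substituting (i)--(iii) into the displayed formula turns the problem into a purely arithmetic estimate of a weighted discrete sum, in which the only large contributions come from quotients of the weight factors evaluated at $x$ and at the nodes.

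I would then fix $x=\cos\theta$, split $[0,\pi]$ into a central band $[\epsilon,\pi-\epsilon]$ and two endpoint neighbourhoods, and inside each region split the sum over $k$ into the $O(1)$ nodes with $|\theta_k-\theta|\lesssim N^{-1}$ and the remaining ``far'' nodes. For $\theta$ in the central band the near nodes contribute $O(1)$ (a Taylor estimate gives $|h_m(x)|\lesssim1$ between consecutive nodes), and for the far nodes the weight quotients stay bounded over the dominant range while $|\cos\theta-\cos\theta_k|\asymp\sin\theta\,|\theta-\theta_k|$, so the far sum reduces to a harmonic-type series $\sum_{j\ge1}j^{-1}\asymp\ln N$, which after multiplication by the prefactor (of size $\lesssim N^{-1/2}$ there) yields $O(\ln N)$. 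For $\theta$ in an endpoint neighbourhood I would sub-split it into the Bessel layer $\theta\lesssim N^{-1}$ and the near-endpoint oscillatory zone $N^{-1}\lesssim\theta\lesssim\epsilon$; in both, using $|\cos\theta-\cos\theta_k|\asymp\theta_k^{2}$ for the far nodes together with the endpoint bounds of (i) and (iii), a direct summation shows that the resulting contribution is $O(\ln N)$ when $\alpha\le-1/2$ but of order $N^{\alpha+1/2}$ when $\alpha>-1/2$; the neighbourhood of $x=-1$ is handled identically with $\beta$ in place of $\alpha$. Taking the maximum over all regions then gives $O(\ln N)$ precisely when $-1<\alpha,\beta\le-1/2$ and $O(N^{\gamma+1/2})$ with $\gamma=\max(\alpha,\beta)$ in every other case.

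I expect the endpoint analysis to be the main obstacle. There the convenient cosine asymptotics degenerate into Bessel-function behaviour, so one has to combine the uniform endpoint bounds for $p_{N+1}^{\alpha,\beta}$ and for its derivative at the nodes with the precise node clustering near $\pm1$ and with the near-node cancellation---the vanishing of $p_{N+1}^{\alpha,\beta}$ at a node offsetting the blow-up of $1/|x-x_k|$---and then track the resulting powers of $\theta_k$, $\pi-\theta_k$ and of the node spacing carefully enough to produce exactly the exponent $\gamma+1/2$ and to locate the threshold $\gamma=-1/2$ at which the polynomial growth collapses to the logarithm. By comparison, the central-band estimate is essentially the textbook argument that a quasi-uniform family of nodes has Lebesgue constant $O(\ln N)$.
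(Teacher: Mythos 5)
The paper does not prove this lemma: it is imported verbatim as a citation of \cite[Lemma 1]{sohrabi2017convergence}, so there is no in-paper argument to compare against. Your plan is the standard classical route to the Lebesgue-constant bound for Jacobi--Gauss interpolation (Szeg\H{o}-type envelope asymptotics for $p_{N+1}^{\alpha,\beta}$, node spacing $\theta_k\asymp k/N$, derivative/Christoffel-number estimates at the nodes, then a near/far and central-band/endpoint-layer decomposition), and it correctly identifies both the source of the two regimes --- the central band always contributes $O(\ln N)$, while the endpoint layers switch from logarithmic to $N^{\gamma+1/2}$ growth exactly at $\gamma=-1/2$ --- and the place where the real work lies, namely the Bessel-regime bookkeeping near $x=\pm1$. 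As written it is a proof plan rather than a proof: the decisive endpoint summations are described but not carried out, so nothing is verified beyond what the cited reference already supplies; if you intend to make it self-contained you would need to execute those estimates (or simply cite the lemma, as the paper does).
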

For an integer $r\ge0$ and $0\le\kappa\le1$, let $C^{r,\kappa}(I)$ be the space of functions whose $r$-th derivatives are H{\"o}lder continuous with exponent $\kappa$, and this space is equipped with the usual norm
\begin{equation*}
	||v||_{C^{r,\kappa}}=\max_{0\le l\le r}\max_{x\in I}|\partial^l_x v(x)|+\max_{0\le l\le r}\sup_{x\neq y}\frac{|\partial ^l_x v(x)-\partial^l_x v(y)|}{|x-y|^\kappa}.
\end{equation*}
Then, \cite{shen2011spectral} provided the following lemma:
\begin{lemma}{\cite[Lemma 5.1]{shen2011spectral}}\label{holderestimate}
For any nonnegative integer $r$ and $\kappa\in(0,1)$, there exists a linear transform $T_N:C^{r,\kappa}\rightarrow \mathbb P_N$ and a positive constant $c_{r,\kappa}$ such that
\begin{equation*}
		||v-T_Nv||_{L^\infty(I)}\le c_{r,\kappa}N^{-(r+\kappa)}||v||_{C^{r,\kappa}}.
	\end{equation*}
\end{lemma}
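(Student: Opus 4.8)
The statement is a Jackson-type (direct) approximation theorem, so the plan is to produce one concrete linear polynomial operator and bound its error through a modulus of continuity. I would first reduce to the $2\pi$-periodic setting via $x=\cos\theta$: given $v\in C^{r,\kappa}(I)$, put $\tilde v(\theta)=v(\cos\theta)$, an even $2\pi$-periodic function. Because $\cos$ is smooth and $1$-Lipschitz, the chain rule together with $|\cos\theta_1-\cos\theta_2|\le|\theta_1-\theta_2|$ gives $\tilde v\in C^{r,\kappa}$ on the circle with $\|\tilde v\|_{C^{r,\kappa}}\le c_r\|v\|_{C^{r,\kappa}}$ (the intermediate derivatives $v^{(j)}$, $j<r$, are $C^1$ hence $\kappa$-H\"older, and $v^{(r)}\circ\cos$ is $\kappa$-H\"older in $\theta$). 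Since an even trigonometric polynomial of degree $\le N$ is exactly an algebraic polynomial of degree $\le N$ in $x=\cos\theta$, and this identification is linear, it suffices to build a linear near-best trigonometric approximation of $\tilde v$.

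For the circle I would use a Jackson-type operator. Fix $s$ with $2s-2\ge r+1$ and consider the Jackson kernels $K_{m,s}(t)=\lambda_{m,s}\bigl(\sin(mt/2)/\sin(t/2)\bigr)^{2s}$, which are even, nonnegative trigonometric polynomials of degree $(m-1)s$ with $\int K_{m,s}=1$ and $\int|t|^j K_{m,s}(t)\,dt\le c_r m^{-j}$ for $0\le j\le r+1$. Taking indices $m_i\asymp N$ with $(m_i-1)s\le N$, form a fixed linear combination $\mathcal K_N=\sum_i c_i K_{m_i,s}$ (the $c_i$ depending only on $r$) so that, in addition, $\int t^j\mathcal K_N(t)\,dt=0$ for $1\le j\le r$; by evenness only the even moments $2\le j\le r$ must be forced, which is a bounded number of linear conditions. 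Set $J_N\tilde v=\tilde v\ast\mathcal K_N$, a linear map into trigonometric polynomials of degree $\le N$. Writing $\tilde v(\theta)-\tilde v(\theta-t)$ via Taylor's formula about $\theta$, so that up to a remainder bounded by $c\,|t|^r\omega(\tilde v^{(r)};|t|)\le c\,|t|^r(1+N|t|)\,\omega(\tilde v^{(r)};1/N)$ it is a polynomial in $t$ of degree $\le r$ with vanishing constant term — hence annihilated upon integration against $\mathcal K_N$ — the moment bounds yield
\[
\|\tilde v-J_N\tilde v\|_\infty\le c_r\,N^{-r}\,\omega\bigl(\tilde v^{(r)};1/N\bigr).
\]
Replacing $J_N\tilde v(\theta)$ by $\tfrac12\bigl(J_N\tilde v(\theta)+J_N\tilde v(-\theta)\bigr)$ keeps linearity and the bound while forcing the output to be even; pulling it back through $x=\cos\theta$ defines the desired linear $T_N:C^{r,\kappa}(I)\to\mathbb P_N$.

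Finally I insert the H\"older hypothesis: $\omega(\tilde v^{(r)};t)\le\|\tilde v\|_{C^{r,\kappa}}t^\kappa\le c_r\|v\|_{C^{r,\kappa}}t^\kappa$, whence $\|v-T_Nv\|_{L^\infty(I)}=\|\tilde v-J_N\tilde v\|_\infty\le c_{r,\kappa}N^{-(r+\kappa)}\|v\|_{C^{r,\kappa}}$, the finitely many small values of $N$ being absorbed into the constant. The main obstacle, as I see it, is exactly the construction and moment analysis of the corrected kernel $\mathcal K_N$: obtaining the full factor $N^{-r}$ — rather than the mere $N^{-\kappa}$ that one would get by approximating $\tilde v^{(r)}$ and integrating $r$ times — rests on the simultaneous requirements $\int t^j\mathcal K_N=0$ for $1\le j\le r$ and $\int|t|^j|\mathcal K_N|\le c_rN^{-j}$ for $0\le j\le r+1$, and on choosing $s$ large enough that the high-order Jackson kernel actually enjoys the latter. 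Checking that composition with $\cos$ preserves the $C^{r,\kappa}$ norm up to a constant is a secondary, routine point. (One may instead quote the classical algebraic Jackson theorem $E_N(v)_\infty\le c_rN^{-r}\omega(v^{(r)};1/N)$ and couple it with a uniformly $L^\infty$-bounded linear near-best operator, e.g.\ the de la Vall\'ee Poussin means of the Chebyshev expansion, which reproduce $\mathbb P_{\lfloor N/2\rfloor}$; this merely hides the same kernel estimate inside the citation.)
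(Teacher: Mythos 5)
The paper does not prove this lemma at all: it is quoted verbatim as Lemma 5.1 of Shen--Tang--Wang's monograph (where it is itself a packaging of the classical Jackson direct theorem for algebraic polynomial approximation), so there is no in-paper argument to compare against. Your proposal is a genuine self-contained proof of that classical result, and the route you take --- transfer to the circle via $x=\cos\theta$, a moment-corrected Jackson kernel $\mathcal K_N$ built from $K_{m,s}$ with $2s-2\ge r+1$, Taylor expansion killed by the vanishing moments, and symmetrization to land back in $\mathbb P_N$ --- is the standard constructive-approximation proof. The reductions you call routine are indeed routine and correctly stated: $\tilde v=v\circ\cos$ lies in $C^{r,\kappa}$ of the circle with controlled norm (Fa\`a di Bruno plus the fact that the lower-order derivatives are Lipschitz), even trigonometric polynomials of degree $N$ correspond linearly to $\mathbb P_N$ via Chebyshev polynomials, and the symmetrization step preserves both linearity and the error bound because $\tilde v$ is even. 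Your moment estimates for $K_{m,s}$ (namely $\int|t|^jK_{m,s}\le c\,m^{-j}$ exactly for $j\le 2s-2$) are also correct.

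The one step you assert rather than establish is the existence of the corrected kernel $\mathcal K_N=\sum_i c_iK_{m_i,s}$ with \emph{uniformly bounded} coefficients $c_i$ satisfying $\sum_ic_i=1$ and the vanishing of the even moments $2\le j\le r$. The moments are only known up to two-sided bounds of order $m_i^{-j}$, so solvability of that linear system with $N$-independent coefficient bounds is not automatic from what you write; one needs either the asymptotics $\int t^jK_{m,s}\,dt=a_{j,s}m^{-j}(1+o(1))$ (which turns the system into a perturbed Vandermonde system in the ratios $(N/m_i)$), or, more cleanly, the finite-difference form of the Jackson operator $\tilde v\mapsto-\int\sum_{k=1}^{r+1}(-1)^k\binom{r+1}{k}\tilde v(\cdot-kt)K_{N,s}(t)\,dt$, which reproduces the constant term, annihilates the polynomial part of the Taylor expansion automatically, and yields the same $N^{-r}\omega(\tilde v^{(r)};1/N)$ bound without any moment correction. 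You correctly identify this as the main obstacle and also offer the legitimate shortcut of citing the algebraic Jackson theorem together with a uniformly bounded linear near-best operator such as de la Vall\'ee Poussin means; since the lemma is itself a citation in the paper, either completion is acceptable, and the rest of your argument is sound.
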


\begin{lemma}\label{lemma2}
Let $x_1<x_2$ and $0<\mu<1$; the following inequality holds:
\begin{subequations}
\begin{align}
\int_{x_1-\delta}^{x_1}\left[(x_1-\tau)^{-\mu}-(x_2-\tau)^{-\mu}\right]d\tau\le c|x_2-x_1|^{1-\mu},\label{lemma3a}\\
\int_{x_2}^{x_2+\delta}\left[(\tau-x_2)^{-\mu}-(\tau-x_1)^{-\mu}\right]d\tau\le c|x_2-x_1|^{1-\mu},\label{lemma3b}
\end{align}
\end{subequations}
where $c$ is a constant that depends on $\mu$.
\end{lemma}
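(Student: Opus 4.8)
The plan is to treat both inequalities in the same way, since \eqref{lemma3b} follows from \eqref{lemma3a} by the substitution $\tau \mapsto x_1 + x_2 - \tau$ (which exchanges the roles of the two points and the two endpoints), so I would focus on \eqref{lemma3a}. Write $h = x_2 - x_1 > 0$. The integrand $(x_1-\tau)^{-\mu} - (x_2-\tau)^{-\mu}$ is nonnegative on $(x_1-\delta, x_1)$ because $0 < x_1 - \tau < x_2 - \tau$ there and $t \mapsto t^{-\mu}$ is decreasing, so the integral is a genuine (positive) quantity and the bound makes sense.

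The key step is to evaluate the integral exactly using the antiderivative $\int t^{-\mu}\,dt = \tfrac{1}{1-\mu} t^{1-\mu}$. Substituting $t = x_1 - \tau$ in the first term and $t = x_2 - \tau$ in the second, the integral equals
\begin{equation*}
\frac{1}{1-\mu}\Big[ \big(x_1-\tau\big)^{1-\mu} - \big(x_2-\tau\big)^{1-\mu} \Big]_{\tau = x_1}^{\tau = x_1-\delta}
= \frac{1}{1-\mu}\Big[ \delta^{1-\mu} - (\delta+h)^{1-\mu} + h^{1-\mu} \Big].
\end{equation*}
Since $0 < 1-\mu < 1$, the function $t \mapsto t^{1-\mu}$ is concave and increasing, hence subadditive, giving $(\delta+h)^{1-\mu} \le \delta^{1-\mu} + h^{1-\mu}$; equivalently $\delta^{1-\mu} - (\delta+h)^{1-\mu} \le 0$. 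Therefore the whole expression is bounded above by $\tfrac{1}{1-\mu} h^{1-\mu} = \tfrac{1}{1-\mu}|x_2-x_1|^{1-\mu}$, which is exactly the claimed bound with $c = 1/(1-\mu)$.

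There is no real obstacle here; the only points requiring a moment's care are checking that the integrand is nonnegative (so the statement is not vacuous and the manipulations respect signs), confirming the concavity/subadditivity inequality $(\delta+h)^{1-\mu}\le \delta^{1-\mu}+h^{1-\mu}$ for $0<1-\mu<1$, and verifying that the change of variables $\tau \mapsto x_1+x_2-\tau$ sends the interval $[x_2, x_2+\delta]$ onto $[x_1-\delta, x_1]$ and turns the integrand of \eqref{lemma3b} into that of \eqref{lemma3a}. All constants are absolute multiples of $1/(1-\mu)$, so the dependence of $c$ on $\mu$ is as stated.
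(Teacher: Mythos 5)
Your argument is correct, but it takes a more direct route than the paper. You evaluate the integral in \eqref{lemma3a} in closed form as $\tfrac{1}{1-\mu}\bigl[\delta^{1-\mu}-(\delta+h)^{1-\mu}+h^{1-\mu}\bigr]$ with $h=x_2-x_1$ and then discard the nonpositive part $\delta^{1-\mu}-(\delta+h)^{1-\mu}$, obtaining the explicit constant $c=1/(1-\mu)$; the paper instead first enlarges the domain of integration from $[x_1-\delta,x_1]$ to $[-1-\delta,x_1]$ (using positivity of the integrand), splits the result into two pieces via the triangle inequality, and controls the first piece by mapping each integral to $[-1,1]$ with a linear change of variables and estimating a difference of the form $\bigl(\tfrac{x_2+1+\delta}{2}\bigr)^{1-\mu}-\bigl(\tfrac{x_1+1+\delta}{2}\bigr)^{1-\mu}$ through an auxiliary integral representation. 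Your computation is shorter, avoids any reference to the endpoint $-1$ (so it does not implicitly use $x_1>-1$), and makes the $\mu$-dependence of $c$ explicit; the paper's detour buys nothing here beyond matching the style of later estimates. Your reduction of \eqref{lemma3b} to \eqref{lemma3a} via $\tau\mapsto x_1+x_2-\tau$ is also valid and is essentially what the paper means by ``a similar approach.'' One cosmetic point: the inequality you actually need, $\delta^{1-\mu}\le(\delta+h)^{1-\mu}$, follows from monotonicity of $t\mapsto t^{1-\mu}$ alone; it is not ``equivalent'' to the subadditivity bound $(\delta+h)^{1-\mu}\le\delta^{1-\mu}+h^{1-\mu}$ (which instead shows the expression is nonnegative), so you should replace that sentence, though nothing in the proof is affected.
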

\begin{proof}
Since $x_1<x_2$, we have
\begin{equation*}
\left[(x_1-\tau)^{-\mu}-(x_2-\tau)^{-\mu}\right]>0,\quad\tau\in[x_1-\delta,x_1],
\end{equation*}
which leads to
\begin{equation}\label{estimate1}
\begin{aligned}
&\int_{x_1-\delta}^{x_1}\left[(x_1-\tau)^{-\mu}-(x_2-\tau)^{-\mu}\right]d\tau\\
&\le \int_{-1-\delta}^{x_1}\left[(x_1-\tau)^{-\mu}-(x_2-\tau)^{-\mu}\right]d\tau\\
&\le \left|\int_{-1-\delta}^{x_1}(x_1-\tau)^{-\mu}d\tau-\int_{-1-\delta}^{x_2}(x_2-\tau)^{-\mu}d\tau\right|+\left|\int_{x_1}^{x_2}(x_2-\tau)^{-\mu}d\tau\right|\\
&\le \left[\left(\frac{x_2+1+\delta}{2}\right)^{1-\mu}-\left(\frac{x_1+1+\delta}{2}\right)^{1-\mu}\right]\int_{-1}^{1}(1-\theta)^{-\mu}d\theta+\frac{|x_2-x_1|^{1-\mu}}{1-\mu},
\end{aligned}
\end{equation}
where the last inequality follows from the linear transformation
\begin{equation*}
	\tau=\frac{x_\xi+1+\delta}{2}\theta+\frac{x_\xi-1-\delta}{2},\quad \xi=1,2.
\end{equation*}
One can easily observe that
\begin{equation}\label{estimate2}
\begin{aligned}
	&\left(\frac{x_2+1+\delta}{2}\right)^{1-\mu}-\left(\frac{x_1+1+\delta}{2}\right)^{1-\mu}=\frac{1-\mu}{2^{1-\mu}}\int_{x_1}^{x_2}(y+1+\delta)^{-\mu}dy\\
	&\le\frac{1-\mu}{2^{1-\mu}}\int_{x_1}^{x_2}(y-x_1)^{-\mu}dy=2^{\mu-1}|x_2-x_1|^{1-\mu},
\end{aligned}
\end{equation}
where we use the fact that $y+1+\delta>y-x_1$. Combining \eqref{estimate1} and \eqref{estimate2} gives the desired estimate in \eqref{lemma3a}. One can prove the estimate in \eqref{lemma3b} with a similar approach.
\end{proof}
Next, we show that the operators $S_1$ and $S_2$ defined in \eqref{operators} are bounded operators from $L^\infty(I\cup I_c)$ to $C^{0,\kappa}(I)$ with $0<\kappa<1-\mu$, which plays a key role in the error analysis in the next section.
\begin{lemma}\label{nonlocalstibility}
Let $0<\mu<1$ and $0<\kappa<1-\mu$; then, for any function $v\in C(I\cup I_c)$ and any $-1<x_1,x_2<1$ with $x_1\neq x_2$, there exists a positive constant $c$ such that
\begin{equation*}
		\frac{|S_\xi v(x_1)-S_\xi v(x_2)|}{|x_1-x_2|^\kappa}\le c||v||_{L^\infty(I\cup I_c)},
	\end{equation*}
where $\xi=1,2$, which implies
\begin{equation*}
		||S_\xi v||_{C^{0,\kappa}}\le c||v||_{L^\infty(I\cup I_c)}.
	\end{equation*}
where $c$ depends on $||\gamma_\delta||_{C^{0,1-\mu}}$ and $||\gamma_\delta||_{L^\infty(D)}$ with $D=\left(I\cup I_c\right)\times\left(I\cup I_c\right)$.
\end{lemma}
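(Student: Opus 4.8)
The plan is to establish the H\"older seminorm bound for $S_1$ directly from its integral definition in \eqref{operators}, and to deduce the companion bound for $S_2$ from the mirror-image argument in which \eqref{lemma3b} replaces \eqref{lemma3a}. The point to keep in mind throughout is that $v$ is only bounded, not continuous, so none of the smoothing may be charged to $v$: it must all come from the weakly singular factor $(x-s)^{-\mu}$ and, to a lesser extent, from the H\"older regularity of $\gamma_\delta$. Fix $-1<x_1<x_2<1$, put $h:=x_2-x_1$ and $V:=\|v\|_{L^\infty(I\cup I_c)}$. The uniform bound $\max_{x\in I}|S_\xi v(x)|\le \|\gamma_\delta\|_{L^\infty(D)}V\int_0^\delta t^{-\mu}\,dt=\tfrac{\delta^{1-\mu}}{1-\mu}\|\gamma_\delta\|_{L^\infty(D)}V$ is immediate, so only the seminorm estimate requires work, and it is convenient to separate the cases $h\ge\delta$ and $h<\delta$. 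If $h\ge\delta$, estimating each term on its own gives $|S_1v(x_1)-S_1v(x_2)|\le \tfrac{2}{1-\mu}\|\gamma_\delta\|_{L^\infty(D)}V\,\delta^{1-\mu}\le \tfrac{2}{1-\mu}\|\gamma_\delta\|_{L^\infty(D)}V\,\delta^{1-\mu-\kappa}h^\kappa$, which is already of the required form since $1-\mu-\kappa>0$.

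The substance is the case $h<\delta$. Writing $S_1v(x_i)=\int_{x_i-\delta}^{x_i}\gamma_\delta(x_i,s)(x_i-s)^{-\mu}v(s)\,ds$, the two integration intervals overlap exactly in $O:=[x_2-\delta,x_1]$, and I will decompose $S_1v(x_1)-S_1v(x_2)$ into (i) the integral over $O$ of the difference of the two integrands, (ii) the \emph{left tail} integral of the first integrand over $[x_1-\delta,x_2-\delta]$, and (iii) minus the \emph{right tail} integral of the second integrand over $[x_1,x_2]$. Pieces (ii) and (iii) are no longer singular after the substitutions $t=x_1-s$ and $t=x_2-s$: one gets $\int_{x_1-\delta}^{x_2-\delta}(x_1-s)^{-\mu}ds=\tfrac{1}{1-\mu}\bigl(\delta^{1-\mu}-(\delta-h)^{1-\mu}\bigr)\le\tfrac{1}{1-\mu}h^{1-\mu}$ (using subadditivity of $t\mapsto t^{1-\mu}$) and $\int_{x_1}^{x_2}(x_2-s)^{-\mu}ds=\tfrac{1}{1-\mu}h^{1-\mu}$, so each of (ii), (iii) is at most $\tfrac{1}{1-\mu}\|\gamma_\delta\|_{L^\infty(D)}V\,h^{1-\mu}$. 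For (i) I split the integrand difference as $\gamma_\delta(x_1,s)\bigl[(x_1-s)^{-\mu}-(x_2-s)^{-\mu}\bigr]v(s)+\bigl[\gamma_\delta(x_1,s)-\gamma_\delta(x_2,s)\bigr](x_2-s)^{-\mu}v(s)$. On $O$ one has $0<x_1-s\le x_2-s$, so the bracket in the first term is nonnegative, whence its contribution is at most $\|\gamma_\delta\|_{L^\infty(D)}V\int_{x_1-\delta}^{x_1}\bigl[(x_1-s)^{-\mu}-(x_2-s)^{-\mu}\bigr]ds$, which is precisely the quantity bounded by $c\,h^{1-\mu}$ in \eqref{lemma3a} of Lemma~\ref{lemma2}; the second term is handled via $|\gamma_\delta(x_1,s)-\gamma_\delta(x_2,s)|\le\|\gamma_\delta\|_{C^{0,1-\mu}}h^{1-\mu}$ together with $\int_O(x_2-s)^{-\mu}ds\le\tfrac{\delta^{1-\mu}}{1-\mu}$. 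Adding the three contributions gives $|S_1v(x_1)-S_1v(x_2)|\le c\,V\,h^{1-\mu}$, with $c$ depending only on $\mu$, $\delta$ and the $C^{0,1-\mu}$- and $L^\infty(D)$-norms of $\gamma_\delta$. Since $h=|x_1-x_2|\le 2$ and $\kappa<1-\mu$, one has $h^{1-\mu}\le 2^{1-\mu-\kappa}h^\kappa$, which turns this into the stated $\kappa$-H\"older bound and, combined with the uniform bound above, into $\|S_\xi v\|_{C^{0,\kappa}}\le c\,V$.

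The hard part is conceptual rather than computational: because the singularity of the integrand sits at the moving point $s=x$, the integrals defining $S_1v(x_1)$ and $S_1v(x_2)$ are supported on different intervals, and one cannot reduce them to a common interval by a change of variables without translating $v$ — which is not allowed here, since $v\notin C^{0,\kappa}$. The overlap-plus-two-tails decomposition is exactly what isolates the genuinely singular interaction into $O$, where the sign property $(x_1-s)^{-\mu}-(x_2-s)^{-\mu}\ge0$ lets Lemma~\ref{lemma2} extract the crucial $h^{1-\mu}$ gain; the tails are elementary and the $x$-dependence of $\gamma_\delta$ costs only its $C^{0,1-\mu}$ modulus and creates no new singularity. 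This is also the reason Lemma~\ref{lemma2} was proved beforehand — it is the engine of step (i).
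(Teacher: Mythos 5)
Your proof is correct and follows essentially the same route as the paper's: a triangle-inequality decomposition into an integrand-difference term (handled by the sign observation plus Lemma~\ref{lemma2} for the singular factor and the $C^{0,1-\mu}$ modulus of $\gamma_\delta$ for the kernel) and two non-singular tail terms, each of order $h^{1-\mu}$, followed by the conversion $h^{1-\mu}\le c\,h^{\kappa}$. The only cosmetic differences are that the paper extends the integrand difference over all of $[x_1-\delta,x_1]$ rather than just the overlap (so it needs no case split on $h$ versus $\delta$) and takes both tails from the second integrand, while you make the final $\kappa$-H\"older conversion and the sup-norm bound explicit.
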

\begin{proof}
Without loss of generality, we assume that $x_1<x_2$. Then, for $\xi=1$, the use of the triangle inequality leads to
\begin{equation*}
	\begin{aligned}
		&|S_1v(x_1)-S_1v(x_2)|\\
		&=\left|\int_{x_1-\delta}^{x_1}\gamma_\delta(x_1,s)\frac{1}{(x_1-s)^\mu}v(s)ds-\int_{x_2-\delta}^{x_2}\gamma_\delta(x_2,s)\frac{1}{(x_2-s)^\mu}v(s)ds\right|\\
		&\le \left|\int_{x_1-\delta}^{x_1}\gamma_\delta(x_1,s)\frac{1}{(x_1-s)^\mu}v(s)ds-\int_{x_1-\delta}^{x_1}\gamma_\delta(x_2,s)\frac{1}{(x_2-s)^\mu}v(s)ds\right|\\
		&+\left|\left(\int_{x_1-\delta}^{x_2-\delta}-\int_{x_1}^{x_2}\right)\gamma_\delta(x_2,s)\frac{1}{(x_2-s)^\mu}v(s)ds\right|\\
		&\le \int_{x_1-\delta}^{x_1}\left|(x_1-s)^{-\mu}-(x_2-s)^{-\mu}\right|\left|\gamma_\delta(x_1,s)\right|\left|v(s)\right|ds\\
		&+\int_{x_1-\delta}^{x_1}(x_2-s)^{-\mu}\left|\gamma_\delta(x_1,s)-\gamma_\delta(x_2,s)\right|\left|v(s)\right|ds\\
		&+\int_{x_1-\delta}^{x_2-\delta}(x_2-s)^{-\mu}\left|\gamma_\delta(x_2,s)\right|\left|v(s)\right|ds+\int_{x_1}^{x_2}(x_2-s)^{-\mu}\left|\gamma_\delta(x_2,s)\right|\left|v(s)\right|ds\\
		&:=E_1+E_2+E_3+E_4.
	\end{aligned}
	\end{equation*}
We now estimate the above four terms one by one. First, by Lemma \ref{lemma2},\\
\begin{equation*}
	\begin{aligned}
	    	E_1&\le ||v||_{L^\infty(I\cup I_c)}||\gamma_\delta||_{L^\infty(D)}\int_{x_1-\delta}^{x_1}\left|(x_1-s)^{-\mu}-(x_2-s)^{-\mu}\right|ds\\
	    	&\le c ||v||_{L^\infty(I\cup I_c)}|x_2-x_1|^{1-\mu},
	\end{aligned}
	\end{equation*}
where $c$ depends on $ ||\gamma_\delta||_{L^\infty(D)}$. Then, we estimate $E_2$
\begin{equation*}
		\begin{aligned}
		E_2&\le||v||_{L^\infty(I\cup I_c)}|x_2-x_1|^{1-\mu}\int_{x_1-\delta}^{x_1}(x_2-s)^{-\mu}\frac{\left|\gamma_\delta(x_1,s)-\gamma_\delta(x_2,s)\right|}{|x_2-x_1|^{1-\mu}}ds\\
		&\le ||v||_{L^\infty(I\cup I_c)}|x_2-x_1|^{1-\mu}||\gamma_\delta||_{C^{0,1-\mu}}\frac{1}{1-\mu}\left[\left(x_2-x_1+\delta\right)^{1-\mu}-\left(x_2-x_1\right)^{1-\mu}\right]\\
		&\le c||v||_{L^\infty(I\cup I_c)}|x_2-x_1|^{1-\mu},
		\end{aligned}
	\end{equation*}
where $c$ depends on $||\gamma_\delta||_{C^{0,1-\mu}}$. Moreover, we have
\begin{equation*}
		\begin{aligned}
		E_3&\le\delta^{-\mu}||\gamma_\delta||_{L^\infty(D)}||v||_{L^\infty(I\cup I_c)}|x_2-x_1|\\
		&\le c||v||_{L^\infty(I\cup I_c)}|x_2-x_1|^{1-\mu},
		\end{aligned}
	\end{equation*}
where the constant $c$ depends on $||\gamma_\delta||_{L^\infty(D)}$. Finally, we begin to estimate $E_4$
\begin{equation*}
		\begin{aligned}
		E_4&\le ||\gamma_\delta||_{L^\infty(D)}||v||_{L^\infty(I\cup I_c)}\int_{x_1}^{x_2}(x_2-s)^{-\mu}ds\\
		&\le c ||v||_{L^\infty(I\cup I_c)} |x_2-x_1|^{1-\mu},
		\end{aligned}
	\end{equation*}
where $c$ depends on $||\gamma_\delta||_{L^\infty(D)}$. The combination of the above estimates completes the proof of the lemma when $\xi=1$, and the case when $\xi=2$ can be handled in the same manner.
\end{proof}
\begin{lemma}{\cite[Lemma 4.5]{tian2020spectral}}\label{nonlocalestimate}
Let $p(t)$ and $Q(t)$ be integrable functions that satisfy
\begin{equation*}
		C_\delta p(t) -\int_{t-\delta}^{t+\delta}p(y)\gamma_\delta(t,y)\frac{1}{|y-t|^\mu}dy=Q(t),\quad t\in(-1,1),
	\end{equation*}
where $C_\delta$ is defined as \eqref{cdelta} and
\begin{equation*}
		p(t)=0,\quad t\in(-1-\delta,-1]\cup[1,1+\delta).
	\end{equation*}
Then,
\begin{equation*}
		||p(t)||_{L^\infty(I)}\le c||Q||_{L^\infty(I)}.
	\end{equation*}
\end{lemma}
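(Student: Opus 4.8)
The plan is to deduce the bound from a nonlocal maximum principle for $-\mathcal L_\delta$ combined with an explicit barrier function. First observe that the hypothesis of the lemma is precisely
\[
-\mathcal L_\delta p(t)=\int_{t-\delta}^{t+\delta}\gamma_\delta(t,y)\frac{1}{|y-t|^{\mu}}\bigl(p(t)-p(y)\bigr)\,dy=Q(t),\qquad t\in(-1,1),
\]
together with $p\equiv0$ on $I_c$; note in particular that $p(\pm1)=0$ since $\pm1\in I_c$. For the pointwise arguments below I treat $p$ as continuous, which is the situation in every application of this lemma; for merely integrable $p$ one first recovers continuity from the identity above using $Q\in L^\infty$ and the boundedness of $\gamma_\delta$, or one argues by regularization.

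The first step is a comparison principle: if $w\in C(\overline{I\cup I_c})$ satisfies $-\mathcal L_\delta w\ge0$ on $I$ and $w\ge0$ on $I_c$, then $w\ge0$ on $\overline I$. If not, $m:=\min_{\overline I}w<0$ is attained at some $t_0\in(-1,1)$ (it cannot be attained at $\pm1$, where $w\ge0$). At $t_0$ the factor $w(t_0)-w(y)$ is $\le0$ for $y\in\overline I$ and $\le w(t_0)<0$ for $y\in I_c$, so $-\mathcal L_\delta w(t_0)\le0$; combined with $-\mathcal L_\delta w(t_0)\ge0$ and the positivity of $\gamma_\delta(t_0,y)|y-t_0|^{-\mu}$ for a.e.\ $y\in(t_0-\delta,t_0+\delta)$, this forces $w\equiv m$ there and forces $(t_0-\delta,t_0+\delta)$ to miss $I_c$, i.e.\ $t_0\in[-1+\delta,1-\delta]$. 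A standard propagation argument (chaining overlapping $\delta$-neighborhoods) then extends the equality $w\equiv m$ through $\overline I$ up to some point of $(1-\delta,1]$, contradicting either $w(1)\ge0$ or the strict inequality above.

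The second step is the barrier. Set $\phi(t):=\tfrac{C}{2}\bigl((1+\delta)^2-t^2\bigr)$ on $[-1-\delta,1+\delta]=\overline{I\cup I_c}$. For $t\in(-1,1)$ and $|h|<\delta$ we have $t+h\in[-1-\delta,1+\delta]$, so $\phi(t)-\phi(t+h)=Cth+\tfrac{C}{2}h^2$ with no truncation; since $\gamma_\delta(t,t+h)=\gamma_\delta(t,t-h)$, the weight $\gamma_\delta(t,t+h)|h|^{-\mu}$ is even in $h$, the odd part of the integral vanishes, and
\[
-\mathcal L_\delta\phi(t)=C\int_0^{\delta}h^{2-\mu}\gamma_\delta(t,t+h)\,dh .
\]
Choosing $C^{-1}:=\inf_{t\in[-1,1]}\int_0^{\delta}h^{2-\mu}\gamma_\delta(t,t+h)\,dh$, which is positive by the nondegeneracy of $\gamma_\delta$, gives $-\mathcal L_\delta\phi\ge1$ on $(-1,1)$, while $\phi\ge0$ on $I_c$ and $\|\phi\|_{L^\infty(I)}=\tfrac{C}{2}(1+\delta)^2$. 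Applying the comparison principle to $w^{\pm}:=\|Q\|_{L^\infty(I)}\phi\mp p$ — for which $-\mathcal L_\delta w^{\pm}=\|Q\|_{L^\infty(I)}(-\mathcal L_\delta\phi)\mp Q\ge\|Q\|_{L^\infty(I)}-\|Q\|_{L^\infty(I)}=0$ on $I$, and $w^{\pm}=\|Q\|_{L^\infty(I)}\phi\ge0$ on $I_c$ because $p=0$ there — yields $|p|\le\|Q\|_{L^\infty(I)}\phi$ on $\overline I$, hence $\|p\|_{L^\infty(I)}\le\|\phi\|_{L^\infty(I)}\|Q\|_{L^\infty(I)}$, i.e.\ the assertion with $c=\tfrac12(1+\delta)^2\bigl(\inf_{t}\int_0^{\delta}h^{2-\mu}\gamma_\delta(t,t+h)\,dh\bigr)^{-1}$.

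The delicate point is the deep-interior case of the comparison principle: when the interaction neighborhood of $t_0$ does not reach $I_c$ one only obtains $-\mathcal L_\delta w(t_0)=0$, and the equality $w\equiv m$ must then be propagated outward along overlapping $\delta$-neighborhoods until the volume-constraint region is met — the usual strong-maximum-principle argument, routine but requiring care. A secondary point to flag is that $c$ above is independent of $\delta$ only provided the scaled second moment $\int_0^{\delta}h^{2-\mu}\gamma_\delta(t,t+h)\,dh$ stays bounded below uniformly in $\delta$; this is exactly the normalization/nondegeneracy of $\gamma_\delta$ underlying the asymptotic compatibility of $-\mathcal L_\delta$ with its local limit, and it reappears in the asymptotic compatibility analysis.
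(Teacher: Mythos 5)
The paper does not actually prove this lemma; it is quoted verbatim from \cite[Lemma 4.5]{tian2020spectral}, so there is no in-paper argument to compare against. Your barrier-plus-maximum-principle proof is the standard way to establish such $L^\infty$ stability for volume-constrained nonlocal Dirichlet problems, and its substance is correct: the identity $-\mathcal L_\delta\phi(t)=C\int_0^\delta h^{2-\mu}\gamma_\delta(t,t+h)\,dh$ for $\phi(t)=\tfrac C2((1+\delta)^2-t^2)$ checks out (the odd part cancels by radiality), the comparison functions $w^\pm$ are set up correctly, and $p(\pm1)=0$ because $\pm1\in I_c$. Two remarks. First, the ``delicate'' propagation step you flag is avoidable: since you already possess a strict supersolution $\phi$, apply the comparison principle to $w^\pm_\varepsilon:=(\|Q\|_{L^\infty(I)}+\varepsilon)\phi\mp p$, which satisfies $-\mathcal L_\delta w^\pm_\varepsilon\ge\varepsilon>0$ strictly on $I$; a negative interior minimum $t_0$ then gives $-\mathcal L_\delta w^\pm_\varepsilon(t_0)\le0$ outright, a contradiction with no chaining of neighborhoods, and letting $\varepsilon\to0$ recovers the bound. (The same perturbation, combined with Lebesgue points and the essential infimum, also disposes of the continuity issue for merely bounded $p$: at near-minimizing Lebesgue points $t_k$ one gets $-\mathcal L_\delta w^\pm_\varepsilon(t_k)\le C_\delta\bigl(w^\pm_\varepsilon(t_k)-\operatorname{ess\,inf}w^\pm_\varepsilon\bigr)\to0$.) Second, your caveat about nondegeneracy is genuine and worth keeping: as stated the paper only assumes $\gamma_\delta\ge0$, under which the lemma can fail (e.g.\ $\gamma_\delta\equiv0$), so a lower bound on $\inf_t\int_0^\delta h^{2-\mu}\gamma_\delta(t,t+h)\,dh$ — which under the normalization \eqref{moment} equals the diffusion constant $C$ and hence gives a $\delta$-uniform $c$ — must be assumed, exactly as you observe.
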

\section{Convergence analysis}\label{convergencesection}
First, we present a solvability theorem for the proposed numerical method \eqref{fulldiscrete}.
\begin{theorem}
Assume $0<\mu<1$ and $-1<\alpha,\beta\le1$. Then, the collocation scheme in \eqref{fulldiscrete} leads to a unique numerical solution $u_N\in \mathbb P_N$, where  $N$ is sufficiently large.
\end{theorem}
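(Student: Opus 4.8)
The plan is to exploit that \eqref{discreteequation} is a square system: $\mathbf A$ is an $(N+1)\times(N+1)$ matrix, so \eqref{fulldiscrete} is uniquely solvable for every right-hand side precisely when $\ker\mathbf A=\{0\}$. Accordingly I would take $f\equiv g\equiv 0$, let $u_N\in\mathbb P_N$ solve the resulting homogeneous version of \eqref{collocationscheme}, and prove $u_N\equiv 0$ for $N$ large. Extend $u_N$ by zero: set $w(x)=u_N(x)$ on $I$ and $w(x)=0$ on $I_c$, so $w\in L^\infty(I\cup I_c)$ with $\|w\|_{L^\infty(I\cup I_c)}=\|u_N\|_{L^\infty(I)}$; since $g\equiv 0$ the ``out'' sums in \eqref{collocationscheme} drop, and since $w$ vanishes a.e.\ on $I_c$, $w$ is admissible for Lemma \ref{nonlocalestimate}.

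Next I would introduce the continuous defect
\begin{equation*}
	Q_N(x):=C_\delta w(x)-S_1w(x)-S_2w(x),\qquad x\in I,
\end{equation*}
with $S_1,S_2$ as in \eqref{operators}. On $I$ one has $C_\delta w=C_\delta u_N\in\mathbb P_N$, while Lemma \ref{nonlocalstibility} gives $S_\xi w\in C^{0,\kappa}(I)$ for every $0<\kappa<1-\mu$, with $\|S_\xi w\|_{C^{0,\kappa}}\le c\|w\|_{L^\infty(I\cup I_c)}=c\|u_N\|_{L^\infty(I)}$; hence $Q_N\in C^{0,\kappa}(I)$. Because $S_1w(t)+S_2w(t)=\int_{t-\delta}^{t+\delta}w(y)\gamma_\delta(t,y)|y-t|^{-\mu}\,dy$, Lemma \ref{nonlocalestimate} applied with $p=w$ and $Q=Q_N$ yields
\begin{equation*}
	\|u_N\|_{L^\infty(I)}=\|w\|_{L^\infty(I)}\le c\,\|Q_N\|_{L^\infty(I)} .
\end{equation*}
It therefore suffices to bound $\|Q_N\|_{L^\infty(I)}$ by something of the form $\eta_N\|u_N\|_{L^\infty(I)}$ with $\eta_N\to 0$.

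This is where the collocation identities enter. Writing $\mathcal Q_\xi^N w(x_i)$ for the $\xi$-th two-sided Jacobi quadrature sum in \eqref{collocationscheme} — which, since $w\equiv 0$ on $I_c$, equals the $M$-point Jacobi--Gauss rule (weight $(1\mp\theta)^{-\mu}$) applied, after the change of variables \eqref{trans}, to $S_\xi w(x_i)$ — the homogeneous equations read $C_\delta u_N(x_i)=\mathcal Q_1^N w(x_i)+\mathcal Q_2^N w(x_i)$ at all $N+1$ collocation nodes. Both sides being polynomials of degree $\le N$ agreeing at $N+1$ nodes, $C_\delta u_N=I_N^{\alpha,\beta}\big(\mathcal Q_1^N w+\mathcal Q_2^N w\big)$ in $\mathbb P_N$, whence
\begin{equation*}
	Q_N=\sum_{\xi=1}^{2}\Big[\,I_N^{\alpha,\beta}\big(\mathcal Q_\xi^N w-S_\xi w\big)+\big(I_N^{\alpha,\beta}S_\xi w-S_\xi w\big)\Big].
\end{equation*}
I would estimate the two groups separately. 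For the interpolation part, Lemmas \ref{interpolationerror} and \ref{holderestimate} together with Lemma \ref{nonlocalstibility} give $\|I_N^{\alpha,\beta}S_\xi w-S_\xi w\|_{L^\infty}\le c\,(1+\|I_N^{\alpha,\beta}\|_\infty)\,N^{-\kappa}\|S_\xi w\|_{C^{0,\kappa}}\le c\,(1+\|I_N^{\alpha,\beta}\|_\infty)\,N^{-\kappa}\|u_N\|_{L^\infty(I)}$. For the quadrature part, $\|I_N^{\alpha,\beta}(\mathcal Q_\xi^N w-S_\xi w)\|_{L^\infty}\le\|I_N^{\alpha,\beta}\|_\infty\max_i|\mathcal Q_\xi^N w(x_i)-S_\xi w(x_i)|$, and the last maximum is the Jacobi--Gauss quadrature error for the integrand $\theta\mapsto\gamma_\delta(x_i,s_\xi(x_i,\theta))\,w(s_\xi(x_i,\theta))$ (weighted by $(1\mp\theta)^{-\mu}$), which is smooth — inheriting the polynomial regularity of $u_N$ and the regularity of $\gamma_\delta$ — except possibly at the single $\theta$ where $s_\xi(x_i,\theta)$ crosses $\partial\Omega$, so Lemma \ref{quadratureerror} (or the classical Jacobi--Gauss estimate) controls it, again by $c\,N^{-\kappa}\|u_N\|_{L^\infty(I)}$. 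Collecting these, $\|u_N\|_{L^\infty(I)}\le c\,(1+\|I_N^{\alpha,\beta}\|_\infty)\,N^{-\kappa}\,\|u_N\|_{L^\infty(I)}$; by Lemma \ref{interpolationerror} the Lebesgue constant $\|I_N^{\alpha,\beta}\|_\infty$ grows at most polynomially (only like $\ln N$ when $\alpha,\beta\le-\tfrac12$), so choosing $\kappa$ close to $1-\mu$ makes the prefactor tend to $0$; for $N$ large it is $<1$, forcing $u_N\equiv 0$. Hence $\ker\mathbf A=\{0\}$, $\mathbf A$ is nonsingular, and \eqref{fulldiscrete} has a unique solution.

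I expect the main obstacle to be precisely this last balance. The limiting feature is that $S_\xi w$ is only H\"older continuous up to $\partial\Omega$ with exponent below $1-\mu$, so no interpolation rate better than $N^{-\kappa}$, $\kappa<1-\mu$, is attainable, and this rate must still dominate the growth of $\|I_N^{\alpha,\beta}\|_\infty$ — this is where the admissible ranges of $\alpha,\beta$ (and their interplay with $\mu$) get pinned down. A secondary technical point is the quadrature integrand at the nodes $x_i$ lying within $\delta$ of $\partial\Omega$, where the window $s_\xi(x_i,[-1,1])$ straddles $\partial\Omega$ and $w$ can jump; one must verify the resulting quadrature error is still $o(N^{-\kappa})$, which it is, since a single interior jump costs only $O(N^{-1})$ in an $N$-point Gauss rule.
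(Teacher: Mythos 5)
Your proposal is correct in outline and follows the same core strategy as the paper's own proof: reduce to the homogeneous system, extend the polynomial by the (zero) volume constraint, invoke the stability estimate of Lemma \ref{nonlocalestimate}, and then show the residual is $o(1)\cdot\|u_N\|_{L^\infty(I)}$ using quadrature/interpolation error bounds together with the Lebesgue constant of Lemma \ref{interpolationerror}. The genuine difference lies in the decomposition of the defect. The paper writes $C_\delta\bar u_N-S\bar u_N=I_N^{\alpha,\beta}G$ with $G$ as in \eqref{G}, keeping only the quadrature-error term and bounding it by $cK_m^*N^{-m}\|u_N\|_{L^\infty(I)}$ via Lemma \ref{quadratureerror}; strictly speaking that identity silently replaces $I_N^{\alpha,\beta}(S\bar u_N)$ by $S\bar u_N$, i.e.\ it drops exactly the terms $\sum_\xi\bigl(I_N^{\alpha,\beta}S_\xi\bar u_N-S_\xi\bar u_N\bigr)$ that you retain and estimate by $c\,(1+\|I_N^{\alpha,\beta}\|_\infty)N^{-\kappa}\|u_N\|_{L^\infty(I)}$ via Lemmas \ref{holderestimate} and \ref{nonlocalstibility}. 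Those terms are not negligible (they reappear as $G_2$ in the paper's proof of Theorem \ref{convergence}), so your version is in fact the more complete argument; the price is a contraction factor decaying only like $N^{-\kappa}$ with $\kappa<1-\mu$ rather than spectrally, which is all that is needed. Two caveats you already flag deserve emphasis because they affect the paper equally: (i) when the window $s_\xi(x_i,[-1,1])$ straddles $\partial\Omega$, the integrand $\bar u_N(s_\xi(x_i,\cdot))$ is only piecewise polynomial, so Lemma \ref{quadratureerror} does not apply verbatim and one must settle for an $O(N^{-1})$ quadrature error at such nodes; and (ii) the smallness condition $c\,(1+\|I_N^{\alpha,\beta}\|_\infty)N^{-\kappa}<1$ forces $\gamma+\tfrac{1}{2}<\kappa<1-\mu$, which is incompatible with the stated range $-1<\alpha,\beta\le1$ unless $\gamma<\tfrac{1}{2}-\mu$ --- the same implicit restriction that appears in the proof of Theorem \ref{convergence}. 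Neither caveat is a defect of your argument relative to the paper's.
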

\begin{proof}
We prove that the assumptions $f(x)=0, x\in I$ and $g(x)=0, x\in I_c$ lead to a unique solution $u_N=0$. First, we define
\begin{equation*}
	\bar{u}_N=\left\{
	\begin{aligned}
	&u_N(x)=\sum_{i=1}^{N+1}u_ih_i(x),&\forall x\in I,\;\\
	&g(x),&\forall x\in I_c.
	\end{aligned}\right.
	\end{equation*}
Thus, the discrete equation in \eqref{fulldiscrete} can be written as
\begin{equation*}
		 C_\delta \bar u_N(x_i)-\left(\frac{\delta}{2}\right)^{1-\mu}\left[\left<\gamma_\delta(x_i,s_1(x_i,\cdot)),\bar{u}_N(s_1(x_i,\cdot))\right>_{N,\omega^{-\mu,0}}+\left<\gamma_\delta(x_i,s_2(x_i,\cdot)),\bar u_N(s_2(x_i,\cdot))\right>_{N,\omega^{0,-\mu}}\right]=0,
	\end{equation*}
which is equivalent to
\begin{equation*}
		\begin{aligned}
		&C_\delta \bar u_N(x)-I^{\alpha,\beta}_N\left\{\left(\frac{\delta}{2}\right)^{1-\mu}\left[\left<\gamma_\delta(x,s_1(x,\cdot)),\bar{u}_N(s_1(x,\cdot))\right>_{N,\omega^{-\mu,0}}+\left<\gamma_\delta(x,s_2(x,\cdot)),\bar u_N(s_2(x,\cdot))\right>_{N,\omega^{0,-\mu}}\right]\right\}\\
		&=C_\delta\bar u_N(x)-S\bar u_N(x)-I_N^{\alpha,\beta}G(x)=0,
		\end{aligned}
	\end{equation*}
where
\begin{equation}\label{G}
	\begin{aligned}
	G(x)&=\left(\frac{\delta}{2}\right)^{1-\mu}\left\{\left<\gamma_\delta(x,s_1(x,\cdot)),\bar u_N(s_1(x,\cdot))\right>_{N,\omega^{-\mu,0}}-\left(\gamma_\delta(x,s_1(x,\cdot)),\bar u_N(s_1(x,\cdot))\right)_{\omega^{-\mu,0}}\right\}\\
	&+\left(\frac{\delta}{2}\right)^{1-\mu}\left\{\left<\gamma_\delta(x,s_2(x,\cdot)),\bar u_N(s_2(x,\cdot))\right>_{N,\omega^{0,-\mu}}-\left(\gamma_\delta(x,s_2(x,\cdot)),\bar u_N(s_2(x,\cdot))\right)_{\omega^{0,-\mu}}\right\}.
	\end{aligned}
	\end{equation}
Then, it is clear that
\begin{equation*}
		C_\delta \bar u_N(x)-S\bar u_N(x)=I_N^{\alpha,\beta}G(x).
	\end{equation*}
The use of Lemma \ref{nonlocalestimate} implies that
\begin{equation}\label{theorem1estimate}
		||\bar u_N||_{L^\infty(I)}\le c||I_N^{\alpha,\beta}G||_{L^\infty(I)}.
	\end{equation}
Directly from Lemma \ref{quadratureerror}, we can deduce that
\begin{equation*}\small
	\begin{aligned}
	&||I^{\alpha,\beta}_NG||_{L^\infty(I)}\\
	&\le\max_{1\le i\le N+1}|G(x_i)|\max_{x\in I}\sum_{i=1}^{N+1}|h_i(x)|\\
	&\le c||I^{\alpha,\beta}_N||_\infty N^{-m}\times \\
	&\max_{1\le i\le N+1}\left(|\gamma_\delta(x_i,s_1(x_i,\cdot))|_{\omega^{0,-\mu}}^{m;N}||\bar u_N(s_1(x_i,\cdot))||_{\omega^{0,-\mu}}+|\gamma_\delta(x_i,s_2(x_i,\cdot))|_{\omega^{0,-\mu}}^{m;N}||\bar u_N(s_2(x_i,\cdot))||_{\omega^{0,-\mu}}\right).
	\end{aligned}
	\end{equation*}
A direct computation leads to
\begin{equation*}
	||\bar u_N(s_1(x_i,\cdot))||_{\omega^{-\mu,0}}=\left(\frac{2}{\delta}\right)^{\frac{1-\mu}{2}}\left(\int_{x_i-\delta}^{x_i}|\bar u_N(s)|^2(x_i-s)^{-\mu}ds\right)^\frac{1}{2}\le c||\bar u_N||_{L^\infty(I)}.		
	\end{equation*}
Similarly,
\begin{equation*}
	||\bar u_N(s_2(x_i,\cdot))||_{\omega^{0,-\mu}}\le c||\bar u_N||_{L^\infty(I)}.	
	\end{equation*}
Hence, we have
\begin{equation}\label{ING11}
	||I_N^{\alpha,\beta }G||_{L^\infty(I)}\le c||I_N^{\alpha,\beta}||_\infty K^*_m N^{-m} ||u_N||_{L^\infty(I)},
	\end{equation}
where
\begin{equation*}
	K^*_m:=\max_{1\le i\le N+1}\left\{|\gamma_\delta(x_i,s_1(x_i,\cdot))|_{\omega^{0,-\mu}}^{m;N}+|\gamma_\delta(x_i,s_2(x_i,\cdot))|_{\omega^{-\mu,0}}^{m;N}\right\}.
	\end{equation*}
The combination of \eqref{theorem1estimate}, \eqref{ING11} and Lemma \ref{interpolationerror} implies that the assumptions $f(x)=0, x\in I$ and $g(x)=0, x\in I_c$ lead to $u_N=0$ when $N$ is sufficiently large. Hence, the spectral collocation solution $u_N$ exists and is unique, as $\mathbb P_N$ is finite dimensional.
\end{proof}
Then, we present a convergence analysis for the proposed numerical method in \eqref{fulldiscrete} with the $L^\infty$ norm.
\begin{theorem}\label{convergence}
Assume $0<\mu<1$ and $-1<\alpha,\beta\le1$. Let $u$ and $u_N$ be the solutions of the nonlocal diffusion equation in \eqref{modelequation} and the collocation scheme in \eqref{fulldiscrete}, respectively. If $u\in H^r(I)$ and $r\ge1$ is an integer, then for sufficiently large $N$, we have the following error estimate:
\begin{equation}\label{errorestimate}
	\begin{aligned}
		||u-u_N||_{L^\infty(I)}\le \left\{\begin{aligned}
		&c\ln N\left[K^*_mN^{-m}||u||_{L^\infty(I)}+N^{\frac{1}{2}-r}|u|^{r;N}\right] ,&-1<\alpha,\beta\le -\frac{1}{2},\\
		&c\left[K^*_mN^{\gamma+\frac{1}{2}-m}||u||_{L^\infty(I)}+N^{1+\gamma-r}|u|^{r;N}\right],&\textup{otherwise},
		\end{aligned}\right.
	\end{aligned}
	\end{equation}
where $\gamma = \max\{\alpha,\beta\}$ and $c$ is a positive constant independent of $N,r,u$.
\end{theorem}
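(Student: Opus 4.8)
The plan is to turn \eqref{errorestimate} into an application of the nonlocal $L^\infty$-stability bound of Lemma~\ref{nonlocalestimate} to the error, and then to estimate the residual through the Jacobi interpolation/quadrature bounds of Lemma~\ref{quadratureerror}, the Lebesgue-constant bound of Lemma~\ref{interpolationerror}, the best-approximation bound of Lemma~\ref{holderestimate}, and the smoothing property $S_\xi\colon L^\infty(I\cup I_c)\to C^{0,\kappa}(I)$ of Lemma~\ref{nonlocalstibility}. Keep the notation of the solvability theorem: $\bar u_N=u_N$ on $I$, $\bar u_N=g$ on $I_c$; $\bar u$ is $u$ extended by $g$; $S=S_1+S_2$; and $G$ is the quadrature remainder \eqref{G} formed from $\bar u_N$. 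Moving the $H^\xi_{\mathrm{out}}$ sums of \eqref{collocationscheme} to the left and applying $I_N^{\alpha,\beta}$ (valid since the identities hold at the $N+1$ points $x_i$ and $C_\delta u_N\in\mathbb P_N$) yields $C_\delta u_N=I_N^{\alpha,\beta}f+I_N^{\alpha,\beta}(S\bar u_N)+I_N^{\alpha,\beta}G$ on $I$; subtracting the exact identity $C_\delta u-S\bar u=f$ of \eqref{modelequation3}, substituting $f=C_\delta u-S\bar u$ into $I_N^{\alpha,\beta}f$, and using that $C_\delta$ is constant, one gets, with $\bar e_N:=\bar u-\bar u_N$ (so $\bar e_N=u-u_N$ on $I$ and $\bar e_N\equiv0$ on $I_c$),
\[
C_\delta\bar e_N-S\bar e_N=C_\delta\bigl(u-I_N^{\alpha,\beta}u\bigr)-I_N^{\alpha,\beta}G-\bigl(I-I_N^{\alpha,\beta}\bigr)\bigl(S\bar e_N\bigr)\qquad\text{on }I.
\]
Because $\bar e_N$ vanishes on $I_c$, Lemma~\ref{nonlocalestimate} gives $\|u-u_N\|_{L^\infty(I)}=\|\bar e_N\|_{L^\infty(I)}\le c\bigl(\|u-I_N^{\alpha,\beta}u\|_{L^\infty(I)}+\|I_N^{\alpha,\beta}G\|_{L^\infty(I)}+\|(I-I_N^{\alpha,\beta})(S\bar e_N)\|_{L^\infty(I)}\bigr)$.

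Next I bound the three residual pieces. Standard Jacobi interpolation theory (Lemmas~\ref{quadratureerror}, \ref{interpolationerror} together with the embedding $H^r(I)\hookrightarrow C^{r-1,1/2}(I)$ and Lemma~\ref{holderestimate}) gives $\|u-I_N^{\alpha,\beta}u\|_{L^\infty(I)}\le c\bigl(1+\|I_N^{\alpha,\beta}\|_\infty\bigr)N^{1/2-r}|u|^{r;N}$. The quadrature term is handled exactly as in the solvability theorem (the chain ending at \eqref{ING11}): Lemma~\ref{quadratureerror} gives $\max_i|G(x_i)|\le cK^*_mN^{-m}\|u_N\|_{L^\infty(I)}$, so $\|I_N^{\alpha,\beta}G\|_{L^\infty(I)}\le c\|I_N^{\alpha,\beta}\|_\infty K^*_mN^{-m}\|u_N\|_{L^\infty(I)}$, and I use $\|u_N\|_{L^\infty(I)}\le\|u\|_{L^\infty(I)}+\|u-u_N\|_{L^\infty(I)}$. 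For the last piece, Lemma~\ref{nonlocalstibility} with a fixed $\kappa\in(0,1-\mu)$ gives $\|S\bar e_N\|_{C^{0,\kappa}}\le c\|\bar e_N\|_{L^\infty(I)}=c\|u-u_N\|_{L^\infty(I)}$, and Lemma~\ref{holderestimate} (with $r=0$) followed by Lemma~\ref{interpolationerror} yields $\|(I-I_N^{\alpha,\beta})(S\bar e_N)\|_{L^\infty(I)}\le c\bigl(1+\|I_N^{\alpha,\beta}\|_\infty\bigr)N^{-\kappa}\|u-u_N\|_{L^\infty(I)}$.

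Assembling these bounds gives
\[
\|u-u_N\|_{L^\infty(I)}\le c\Bigl[\bigl(1+\|I_N^{\alpha,\beta}\|_\infty\bigr)N^{1/2-r}|u|^{r;N}+\|I_N^{\alpha,\beta}\|_\infty K^*_mN^{-m}\|u\|_{L^\infty(I)}\Bigr]+\rho_N\|u-u_N\|_{L^\infty(I)},
\]
where $\rho_N:=c\bigl(1+\|I_N^{\alpha,\beta}\|_\infty\bigr)\bigl(N^{-\kappa}+K^*_mN^{-m}\bigr)$. The step I expect to be the main obstacle is verifying $\rho_N\to0$ as $N\to\infty$: by Lemma~\ref{interpolationerror} this is immediate when $-1<\alpha,\beta\le-\tfrac12$ (there $\|I_N^{\alpha,\beta}\|_\infty=O(\ln N)$, and $\ln N\,(N^{-\kappa}+K^*_mN^{-m})\to0$ for any fixed $\kappa\in(0,1-\mu)$ and $m\ge1$), whereas for the remaining parameters $\|I_N^{\alpha,\beta}\|_\infty=O(N^{\gamma+1/2})$ and one needs $\gamma=\max\{\alpha,\beta\}$ small enough relative to $\kappa$ (hence to $1-\mu$) and to $m$ so that $N^{\gamma+1/2-\kappa}\to0$ and $N^{\gamma+1/2-m}\to0$; this is the place where the smoothing of $S$ (Lemma~\ref{nonlocalstibility}) and the quadrature order $m$ must outweigh the Lebesgue-constant growth. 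Granting $\rho_N\le\tfrac12$ for large $N$, the last term is absorbed on the left, and substituting the two regimes of Lemma~\ref{interpolationerror} for $\|I_N^{\alpha,\beta}\|_\infty$ — turning $(1+\|I_N^{\alpha,\beta}\|_\infty)N^{1/2-r}$ into $c\ln N\,N^{1/2-r}$ or $cN^{1+\gamma-r}$, and $\|I_N^{\alpha,\beta}\|_\infty K^*_mN^{-m}$ into $c\ln N\,K^*_mN^{-m}$ or $cK^*_mN^{\gamma+1/2-m}$ — gives precisely the two cases of \eqref{errorestimate}.
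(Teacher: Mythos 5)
Your proposal is correct and follows essentially the same route as the paper: the same error equation $C_\delta\bar e-S\bar e=C_\delta(u-I_N^{\alpha,\beta}u)-I_N^{\alpha,\beta}G-(I-I_N^{\alpha,\beta})(S\bar e)$, the same application of Lemma~\ref{nonlocalestimate}, the same three residual bounds via Lemmas~\ref{quadratureerror}, \ref{interpolationerror}, \ref{holderestimate} and \ref{nonlocalstibility}, and the same absorption argument (the paper likewise imposes $\gamma+\tfrac12-\kappa<0$ in the non-logarithmic case, the delicate point you flag). The only cosmetic difference is that the paper obtains $\|u-I_N^{\alpha,\beta}u\|_{L^\infty(I)}\le c(1+\|I_N^{\alpha,\beta}\|_\infty)N^{1/2-r}|u|^{r;N}$ by inserting the Legendre interpolant and using a Sobolev inequality rather than a H\"older embedding, which yields the identical bound.
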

\begin{proof}
First, we define
\begin{equation*}
	\bar{u}_N=\left\{
	\begin{aligned}
	&u_N(x)=\sum_{i=1}^{N+1}u_ih_i(x),&\forall x\in I,\;\\
	&g(x),&\forall x\in I_c.
	\end{aligned}\right.
	\end{equation*}
Inserting the collocation points $\{x_i\}_{i=1}^{N+1}$ into the nonlocal diffusion equation in \eqref{modelequation3} leads to
\begin{equation*}
	\begin{aligned}
			&C_\delta u(x_i)-f(x_i)\\
			&=\left(\frac{\delta}{2}\right)^{1-\mu}\left[\left(\gamma_\delta(x_i,s_1(x_i,\cdot)),u(s_1(x_i,\cdot))\right)_{\omega^{-\mu,0}}+\left(\gamma_\delta(x_i,s_2(x_i,\cdot)),u(s_2(x_i,\cdot))\right)_{\omega^{0,-\mu}}\right]
	\end{aligned}
	\end{equation*}
for $1\le i\le N+1$. Then, \eqref{collocationscheme} is rewritten as
\begin{equation*}
	\begin{aligned}
			&C_\delta \bar{u}_N(x_i)-f(x_i)\\
			&=\left(\frac{\delta}{2}\right)^{1-\mu}\left[\left<\gamma_\delta(x_i,s_1(x_i,\cdot)),\bar{u}_N(s_1(x_i,\cdot))\right>_{N,\omega^{-\mu,0}}+\left<\gamma_\delta(x_i,s_2(x_i,\cdot)),\bar u_N(s_2(x_i,\cdot))\right>_{N,\omega^{0,-\mu}}\right],
	\end{aligned}
	\end{equation*}
for $1\le i\le N+1$.

Denoting $e=u-u_N$ and $\bar e=u-\bar u_N$, we have the following error equation:
\begin{equation}\label{errorequation}
		\begin{aligned}
		C_\delta \bar{e}(x_i)=\left(\frac{\delta}{2}\right)^{1-\mu}\left[\left(\gamma_\delta(x_i,s_1(x_i,\cdot)),\bar e(s_1(x_i,\cdot))\right)_{\omega^{-\mu,0}}+\left(\gamma_\delta(x_i,s_2(x_i,\cdot)),\bar e(s_2(x_i,\cdot))\right)_{\omega^{0,-\mu}}\right]-G(x_i),
		\end{aligned}
	\end{equation}
where $G(x)$ is defined in \eqref{G}. Then, we rewrite \eqref{errorequation} as
\begin{equation*}
		\begin{aligned}
		&C_\delta\left(I^{\alpha,\beta}_N u(x)-\bar u_N(x)\right)\\
		&=I_N^{\alpha,\beta}\left(\int_{x-\delta}^{x}(x-s)^{-\mu}\gamma_\delta(x,s)\bar e(s)ds+\int_x^{x+\delta}(s-x)^{-\mu}\gamma_\delta(x,s)\bar e(s)ds\right)-I_N^{\alpha,\beta}G(x).
		\end{aligned}
	\end{equation*}
Consequently,
\begin{equation*}
		\begin{aligned}
		C_\delta\bar{e}&=C_\delta(u-\bar u_N)\\
		&=C_\delta\left(u-I_N^{\alpha,\beta}u+I_N^{\alpha,\beta}u-\bar u_N\right)\\
		&=\int_{x-\delta}^{x+\delta}|s-x|^{-\mu}\left(\bar{e}(s)-\bar{e}(x)\right)ds\\
		&+G_1+G_2-I^{\alpha,\beta}_NG,
		\end{aligned}
	\end{equation*}
where
\begin{equation*}
		\begin{aligned}
		G_1&=C_\delta (u-I^{\alpha,\beta}_Nu)\\
		G_2&=I_N^{\alpha,\beta}\left(\int_{x-\delta}^{x}(x-s)^{-\mu}\gamma_\delta(x,s)\bar e(s)ds\right)-\int_{x-\delta}^{x}(x-s)^{-\mu}\gamma_\delta(x,s)\bar e(s)ds\\
		&+I_N^{\alpha,\beta}\left(\int_{x}^{x+\delta}(s-x)^{-\mu}\gamma_\delta(x,s)\bar e(s)ds\right)-\int_{x}^{x+\delta}(s-x)^{-\mu}\gamma_\delta(x,s)\bar e(s)ds.
		\end{aligned}
	\end{equation*}
It is worthwhile to note that $\bar u_N(x) = u(x)$ for $x \in I_c$, which leads to $\bar e(x)=0 $ for $x\in I_c$. Thus, by Lemma \ref{nonlocalestimate},
\begin{equation}\label{error1}
		||\bar{e}||_{L^\infty(I)}=||e||_{L^\infty(I)}\le c\left(||G_1||_{L^\infty(I)}+||G_2||_{L^\infty(I)}+||I^{\alpha,\beta}_NG||_{L^\infty(I)}\right).
	\end{equation}
From \eqref{ING11}, we have
\begin{equation}\label{ING1}
		||I_N^{\alpha,\beta }G||_{L^\infty(I)}\le c||I_N^{\alpha,\beta}||_\infty K^*_m N^{-m} (||u||_{L^\infty(I)}+||\bar e||_{L^\infty(I)}).
	\end{equation}
Combining estimate \eqref{ING1} and Lemma \ref{interpolationerror}, for sufficiently large $N$, we can obtain that
\begin{equation}\label{ING}
		||I_N^{\alpha,\beta }G||_{L^\infty(I)}\le\left\{\begin{aligned}
		&\frac{1}{3}||\bar e||_{L^\infty(I)}+cK^*_m N^{-m}\ln N||u||_{L^\infty(I)},&-1<\alpha,\beta\le\-\frac{1}{2},\\
		&\frac{1}{3}||\bar e||_{L^\infty(I)}+cK^*_m N^{\gamma+\frac{1}{2}-m}||u||_{L^\infty(I)},&\textup{otherwise}.
		\end{aligned}\right.
	\end{equation}
Furthermore, using Lemma \ref{interpolationerror2} and the Sobolev inequality\upcite{shen2011 spectral} gives
\begin{equation*}
	\begin{aligned}
		||G_1||_{L^\infty(I)}&=C_\delta ||u-I^{\alpha,\beta}_Nu||_{L^\infty(I)}=C_\delta ||u-I_Nu+I_N^{\alpha,\beta}(I_Nu-u)||_{L^\infty(I)}\\
		&\le C_\delta (1+||I^{\alpha,\beta}_N||_\infty)||u-I_Nu||_{L^{\infty}(I)}\\
		&\le C_\delta (1+||I^{\alpha,\beta}_N||_\infty)||u-I_Nu||_{\omega^{0,0}}^{\frac{1}{2}}||u-I_Nu||_{1,\omega^{0,0}}^{\frac{1}{2}}\\
		&\le c(1+||I^{\alpha,\beta}_N||_\infty)N^{\frac{1}{2}-r}|u|_{\omega^{0,0}}^{r;N},
	\end{aligned}
	\end{equation*}
where $I_N$ denotes the Legendre polynomial interpolation operator. Therefore, the combination of the above estimate and Lemma \ref{interpolationerror} implies
\begin{equation}\label{G1}
		||G_1||_{L^\infty(I)}\le
		\left\{\begin{aligned}
		&c N^{\frac{1}{2}-r}\ln N|u|_{\omega^{0,0}}^{r;N},&-1<\alpha,\beta\le-\frac{1}{2},\\
		&cN^{1+\gamma-r}|u|_{\omega^{0,0}}^{r;N},&\textup{otherwise}.
		\end{aligned}\right.
	\end{equation}
Finally, from Lemma \ref{holderestimate} and Lemma \ref{nonlocalstibility}, we have
\begin{equation*}
\begin{aligned}
	||G_2||_{L^\infty(I)}&\le||I^{\alpha,\beta}_N\left(S_1\bar e\right)-S_1\bar e||_{L^\infty(I)}+||I^{\alpha,\beta}_N\left(S_2\bar e\right)-S_2\bar e||_{L^\infty(I)}\\
	&\le||I^{\alpha,\beta}_N\left(S_1\bar e\right)-T_N\left(S_1 \bar e\right)||_{L^\infty(I)}+||T_N\left(S_1 \bar e\right)-S_1 \bar e||_{L^\infty(I)}\\
	&+||I^{\alpha,\beta}_N\left(S_2\bar e\right)-T_N\left(S_2 \bar e\right)||_{L^\infty(I)}+||T_N\left(S_2 \bar e\right)-S_2 \bar e||_{L^\infty(I)}\\
	&\le(1+||I_N^{\alpha,\beta}||_\infty)\left(||T_N\left(S_1 \bar e\right)-S_1 \bar e||_{L^\infty(I)}+||T_N\left(S_2 \bar e\right)-S_2 \bar e||_{L^\infty(I)}\right)\\
	&\le c (1+||I_N^{\alpha,\beta}||_\infty) N^{-\kappa} \left(||S_1\bar{e}||_{C^{0,\kappa}}+||S_2\bar{ e}||_{C^{0,\kappa}}\right)\\
	&\le c (1+||I_N^{\alpha,\beta}||_\infty) N^{-\kappa} ||\bar{e}||_{L^{\infty}(I)}.
\end{aligned}
\end{equation*}
Thus, according to Lemma \ref{interpolationerror} and the above estimate, we have
\begin{equation}\label{G2}
		||G_2||_{L^\infty(I)}\le\left\{\begin{aligned}
		&cN^{-\kappa} \ln N||\bar{e}||_{L^\infty(I)},&-1<\alpha,\beta\le-\frac{1}{2},\\
		&cN^{\gamma+\frac{1}{2}-\kappa}|\bar{e}||_{L^\infty(I)},&\textup{otherwise}.
		\end{aligned}\right.
\end{equation}
Then, under the assumption that $\kappa\ge 0$ and $\gamma+\frac{1}{2}-\kappa<0$, for sufficiently large $N$, we have
\begin{equation*}
	||G_2||_{L^\infty(I)}\le \frac{1}{3}||\bar{e}||_{L^{\infty}(I)}.
\end{equation*}
Combining \eqref{error1}, \eqref{ING}, \eqref{G1} and \eqref{G2} gives the desired estimate in \eqref{errorestimate}.
\end{proof}
\section{Asymptotic compatibility}
We now prove the asymptotic compatibility of the spectral collocation methods, which means that the spectral collocation solution converges to its correct local limit as $1/N$ and $\delta$ both go to zero. This provides natural links between nonlocal models and traditional PDE models. Here, $u_0$ is denoted as the solution of the following traditional diffusion equation:
\begin{equation}\label{localequation}
	\left\{\begin{aligned}
	&-C u_0''(x)=f_0(x),&x\in I\;\\
	&u_0(x)=g_0(x),&x\in\partial I.
	\end{aligned}\right.
\end{equation}
Then, the asymptotic relation between $u_0$ and $u_N$ is established in the following theorem.
\begin{theorem}\label{actheorem}
Assume that all the conditions in Theorem \ref{convergence} hold. Let $u,u_N$ be the solutions of the nonlocal diffusion equation in \eqref{modelequation} and the collocation scheme in \eqref{collocationscheme}, respectively. If $u\in H^m(I)\cap C^4(I)$ and $m\ge1$ is an integer, $g(x)=g_0(x)$ for $x\in \partial I$, and
\begin{equation*}
			||f(x)-f_0(x)||_{\omega^{0,0}}\le c\delta^\lambda.
	\end{equation*}
We further assume that the kernel function $\gamma_\delta$ satisfies
\begin{equation}\label{moment}
		\int_{x-\delta}^{x+\delta}\gamma_\delta(x,y)\frac{1}{|y-x|^\mu}(y-x)^2dy=2C.
	\end{equation}
Then, it holds that
\begin{equation*}
	||u_N-u_0||_{\omega^{0,0}}\le\left\{\begin{aligned}
	&O(N^{-m}\ln N)+O(N^{\frac{1}{2}-r}\ln N)+O(\delta^{\min\{2,\lambda\}}),&-1<\alpha,\beta\le-\frac{1}{2},\\
	&O(N^{\gamma+\frac{1}{2}-m})+O(N^{1+\gamma-r})++O(\delta^{\min\{2,\lambda\}}),&\textup{otherwise}.
	\end{aligned}\right.	
	\end{equation*}
\end{theorem}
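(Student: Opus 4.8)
The plan is to split the error by the triangle inequality,
\[
\|u_N-u_0\|_{\omega^{0,0}}\le \|u_N-u\|_{\omega^{0,0}}+\|u-u_0\|_{\omega^{0,0}},
\]
and to treat the discretization error $\|u_N-u\|$ and the modeling (nonlocal-to-local) error $\|u-u_0\|$ separately. The first term is essentially Theorem~\ref{convergence}; the second term is where the moment condition \eqref{moment}, the symmetry of $\gamma_\delta$, and the $C^4$ regularity of $u$ enter.

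For $\|u_N-u\|_{\omega^{0,0}}$, since $I=(-1,1)$ is bounded one has $\|v\|_{\omega^{0,0}}\le\sqrt2\,\|v\|_{L^\infty(I)}$ for any $v\in C(I)$, so I would simply invoke the $L^\infty$ estimate \eqref{errorestimate} of Theorem~\ref{convergence}. Because $u\in H^m(I)\cap C^4(I)$, the quantities $\|u\|_{L^\infty(I)}$, the seminorm $|u|^{r;N}$ and $K^*_m$ are bounded uniformly in $N$ (and, under the usual $\delta$-scaling of $\gamma_\delta$, uniformly in $\delta$), so this term contributes exactly the $N$-dependent parts $O(N^{-m}\ln N)+O(N^{\frac12-r}\ln N)$ (resp. $O(N^{\gamma+\frac12-m})+O(N^{1+\gamma-r})$) of the claimed bound.

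For $\|u-u_0\|_{\omega^{0,0}}$, I would Taylor-expand $\mathcal{L}_\delta u$ about $x$: writing $u(y)-u(x)=u'(x)(y-x)+\tfrac12u''(x)(y-x)^2+\tfrac16u'''(x)(y-x)^3+\tfrac1{24}u^{(4)}(\xi)(y-x)^4$ and using that $\gamma_\delta(x,\cdot)$ is symmetric about $x$, the first- and third-order moments of $\gamma_\delta(x,\cdot)|{\cdot}-x|^{-\mu}$ vanish; the second-order moment equals $2C$ by \eqref{moment}, so the quadratic term yields precisely $Cu''(x)$; and the remainder is controlled by
\[
\frac{\|u^{(4)}\|_{L^\infty}}{24}\int_{x-\delta}^{x+\delta}\gamma_\delta(x,y)\frac{|y-x|^4}{|y-x|^\mu}\,dy\le \frac{\|u^{(4)}\|_{L^\infty}}{24}\,\delta^2\int_{x-\delta}^{x+\delta}\gamma_\delta(x,y)\frac{(y-x)^2}{|y-x|^\mu}\,dy=\frac{C}{12}\|u^{(4)}\|_{L^\infty}\,\delta^2 .
\]
Thus $\mathcal{L}_\delta u=Cu''+R_\delta$ with $\|R_\delta\|_{\omega^{0,0}}\le c\delta^2$, and since $-\mathcal{L}_\delta u=f$ on $I$ this gives $-Cu''=f+R_\delta$. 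Subtracting $-Cu_0''=f_0$ and using $u-u_0=g-g_0=0$ on $\partial I$, the function $w=u-u_0$ solves the local Dirichlet problem $-Cw''=(f-f_0)+R_\delta$ with homogeneous boundary data; the classical $L^2$ a priori bound for \eqref{localequation} then gives $\|w\|_{\omega^{0,0}}\le c\big(\|f-f_0\|_{\omega^{0,0}}+\|R_\delta\|_{\omega^{0,0}}\big)\le c(\delta^\lambda+\delta^2)\le c\,\delta^{\min\{2,\lambda\}}$. Combining the two pieces yields the stated estimate.

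The hard part will be the behaviour near $\partial I$: for $x\in I$ within $\delta$ of $\partial I$ the stencil of $\mathcal{L}_\delta u(x)$ samples $u$ on $I_c$, where $u=g$, so making the remainder genuinely $O(\delta^2)$ there requires the solution together with its volume constraint to be $C^4$ across $\partial I$ (so that the $\|u^{(4)}\|$ above is meaningful on $I\cup I_c$), not merely $C^4$ inside $I$ with $g=g_0$ only at the endpoints. One must therefore either read the hypothesis as $u\in C^4(I\cup I_c)$, or isolate a boundary layer of width $O(\delta)$ and show its $L^2$ contribution is still $O(\delta^{\min\{2,\lambda\}})$. A secondary check is that the constants imported from Section~\ref{convergencesection} — those of Lemma~\ref{nonlocalestimate}, Lemma~\ref{nonlocalstibility}, and $C_\delta$ — are uniform as $\delta\to0$, which is precisely what upgrades ``convergent for each fixed $\delta$'' to ``asymptotically compatible''.
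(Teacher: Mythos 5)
Your proposal follows essentially the same route as the paper: a triangle-inequality split into the discretization error (handled by Theorem \ref{convergence}) and the modeling error (handled by a fourth-order Taylor expansion, the symmetry and moment condition \eqref{moment}, and the $L^2$ a priori bound for the local Dirichlet problem). Your closing remarks about the $O(\delta)$ boundary layer and the $\delta$-uniformity of the imported constants flag genuine subtleties that the paper's own proof passes over silently, but they do not change the argument's structure.
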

\begin{proof}
For $x\in I$ and $y\in [x-\delta,x+\delta]$, by Taylor's theorem,
\begin{equation}\label{taylor}
		u(y)-u(x)=u'(x)(y-x)+\frac{1}{2}u''(x)(y-x)^2+\frac{1}{3!}u^{(3)}(x)(y-x)^3+u^{(4)}(\xi)(y-x)^4,
	\end{equation}
where $\xi\in[\min\{x,y\},\max\{x,y\}]$. Substituting \eqref{taylor} into \eqref{modelequation} and using \eqref{moment} yields
\begin{equation}\label{taylor2}
		-\int_{x-\delta}^{x+\delta}\gamma_\delta\frac{1}{|x-y|^\mu}\left(u(y)-u(x)\right)dy=-Cu''(x)+O(\delta^2).
	\end{equation}
Then, combining \eqref{localequation} and \eqref{taylor2} leads to
\begin{equation*}
	 	-C\left(u-u_0\right)''=f(x)-f_0(x)+O(\delta^2).
	 \end{equation*}
Based on the regularity of the solutions of Poisson's equation, we have
\begin{equation}\label{regularity}
	 	||u-u_0||_{2,\omega^{0,0}}\le||f(x)-f_0(x)+O(\delta^2)||_{\omega^{0,0}}= O(\delta^{\min\{2,\lambda\}}),
	 \end{equation}
where we use the assumption that $(u-u_0)|_{\partial I}=0$. The combination of the triangle inequality, Theorem \ref{convergence} and \eqref{regularity} implies
\begin{equation}\label{ac}
	 	\begin{aligned}
	 	||u_N-u_0||_{\omega^{0,0}}&\le||u_N-u||_{\omega^{0,0}}+||u-u_0||_{\omega^{0,0}}\\
	 	&\le\left\{\begin{aligned}
	 	&O(N^{-m}\ln N)+O(N^{\frac{1}{2}-r}\ln N)+O(\delta^{\min\{2,\lambda\}}),&-1<\alpha,\beta\le-\frac{1}{2},\\
	 	&O(N^{\gamma+\frac{1}{2}-m})+O(N^{1+\gamma-r})+O(\delta^{\min\{2,\lambda\}}),&\textup{otherwise}.
	 	\end{aligned}\right.
	 	\end{aligned}
	 \end{equation}
This completes the proof of the theorem.
\end{proof}
\section{Numerical experiments}
In this section, two numerical examples are used to verify the theoretical results of the previous sections. Without loss of generality, we use Legendre-collocation methods ($\alpha=\beta=0$) and Chebyshev-collocation methods ($\alpha=\beta=-\frac{1}{2}$) to solve weakly singular nonlocal diffusion equations \eqref{modelequation} with $\mu=\frac{1}{2}$ and $\delta=0.2$ in Example 1, and the numerical solutions are denoted by $u_N^l$ and $u_N^c$, respectively. The other Jacobi-collocation methods can be implemented similarly. Furthermore, the asymptotic compatibility of the Jacobi collocation methods is also verified in Example 2.\\

\noindent{\bf{Example 1.}} In the first example, we consider the weakly singular nonlocal diffusion equations $\eqref{modelequation}$ with a constant kernel $\gamma_\delta(x,y)=\frac{5}{2}\delta^{-\frac{5}{2}}$ and a Gaussian-type kernel $\gamma_\delta=\exp\left(-\frac{|y-x|^2}{\delta^2}\right)$. For the constant kernel, we choose an exact solution $u(x)=xe^x$ with the corresponding source function
\begin{equation*}
		f(x)=-\frac{5}{4}\delta^{-3}e^x\left(2\delta e^{-\delta}(1+e^{2\delta})-8\delta x+\sqrt{\delta}\sqrt{\pi}(2x-1)\left(\textup{erf}(\sqrt{\delta})+\textup{erfi}(\sqrt{\delta})\right)\right),
	\end{equation*}
where $\textup{erf}(x)$ denotes the Gaussian error function and is defined as
\begin{equation*}
		\textup{erf}(x)=\frac{2}{\sqrt\pi}\int_0^x e^{-t^2}dt,
	\end{equation*}
and $\textup{erfi}(x)=-i\textup{erf}(ix)$ denotes the imaginary error function with $i=\sqrt{-1}$.

For the Gaussian-type kernel, the exact solution $u(x)=x(1-x)$
is used with the source function
\begin{equation*}
	f(x)=\delta^{\frac{5}{2}}\left(\Gamma(\frac{5}{4})-\Gamma(\frac{5}{4},1)\right),
\end{equation*}
where $\Gamma(a)$ denotes a standard Gamma function and $\Gamma(a,x)$ denotes an upper incomplete Gamma function. The corresponding convergence results are shown in Figure \ref{convergenceplot} and Table \ref{convergencetable}. Clearly, the desired spectral accuracy is obtained, which verifies our theoretical results.\\
\begin{figure}[htbp]
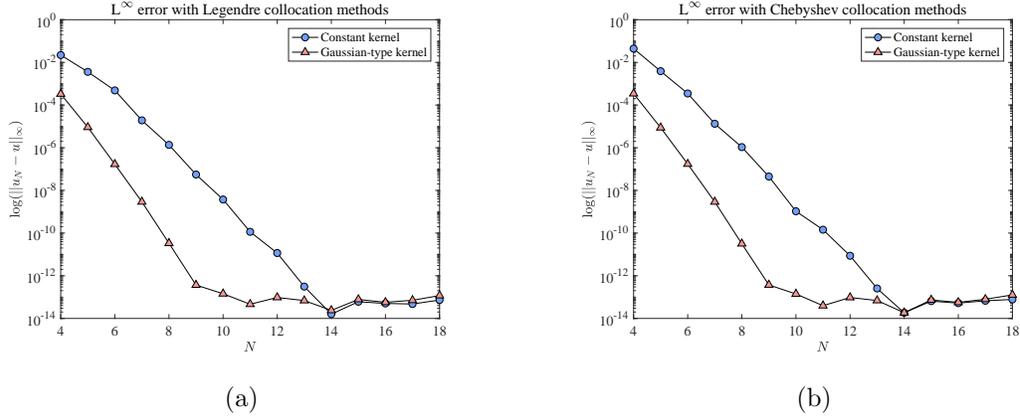

\begin{minipage}[htbp]{55 mm}
		\centering
\centerline{\includegraphics[width=65 mm]{legendre.eps}}
\centerline{(a)}
\end{minipage}
\hfil
\begin{minipage}[htbp]{55 mm}
\centerline{\includegraphics[width=65 mm]{chebyshev.eps}}
\centerline{(b)}
\end{minipage}
\caption{Convergence results for weakly singular nonlocal diffusion equations with two different kernel functions using Jacobi-collocation methods. (a): Legendre collocation method. (b): Chebyshev collocation method.}
	\label{convergenceplot}
\end{figure}\par

\begin{table*}[htbp]
	\scriptsize
	\centering
	\caption{Convergence results for weakly singular nonlocal diffusion equations \eqref{modelequation} with spectral collocation methods. }
	\label{convergencetable}
	\begin{tabular}{ccccccccc}
		\toprule
		\multirow{3}{*}{$N$} & \multicolumn{2}{c}{Constant kernel} & \multicolumn{2}{c}{Gaussian-type kernel}& \\
		\cmidrule(r){2-3} \cmidrule(r){4-5} 
		&  $L^\infty(u_N^l-u)$       &  $L^\infty(u_N^c-u)$   
		&  $L^\infty(u_N^l-u)$       &  $L^\infty(u_N^c-u)$    \\
		\midrule
		$4$             &2.21e-02               & 4.42e-02             &3.32e-04                & 3.43e-04           \\
		$6$             &4.85e-04              & 3.48e-04             &1.69e-07                 & 1.72e-07            \\
		$8$             &1.36e-06               & 1.07e-06              &3.34e-11                 & 3.17e-11             \\
		$10 $           &3.75e-09              & 1.05e-09              &1.43e-13                 & 1.40e-13             \\
		$12$            &1.16e-11                & 8.65e-12             &9.59e-14                 & 9.56e-14             \\
		$14$            &1.58e-14               & 1.84e-14              &2.39e-14                 & 1.85e-14             \\
		$16 $           &5.04e-14               &5.21e-14               &5.70e-14                 & 5.67e-14             \\
		$18 $           &7.29e-14               &7.75e-14                &1.17e-13                 & 1.27e-13              \\
		\bottomrule
	\end{tabular}
\end{table*}

\noindent{\bf{Example 2.}} Then, we verify the asymptotic compatibility of the spectral collocation method when the nonlocal length scale $\delta$ is coupled with $N$ by setting $\delta=\frac{1}{N}$. We consider a traditional diffusion equation $-u_0''(x) = f_0$ with an exact solution $u_0(x)=xe^x$. The source functions for the traditional and nonlocal diffusion equations are both given by $f(x)=f_0(x)=-xe^x-2e^x$. For the nonlocal diffusion equation, a constant kernel function $\gamma_\delta(x,y)=\frac{5}{2}\delta^{-\frac{5}{2}}$ is selected for simplicity; then, we have $C=\frac{1}{2}\int_{x-\delta}^{x+\delta}\gamma_\delta(x,y)\frac{1}{|y-x|^\mu}(y-x)^2dy=1$ with $\mu=\frac{1}{2}$. Additionally, the boundary conditions for both the traditional and nonlocal diffusion equations are selected to be compatible with the exact solution. The convergence profiles are shown in Figure \ref{delta_convergence}, which shows that the compatibility error is dominated by $O(\delta^2)$ and further verifies Theorem \ref{actheorem}.

\begin{figure}[htbp]
	\centering
	\includegraphics[width=75mm]{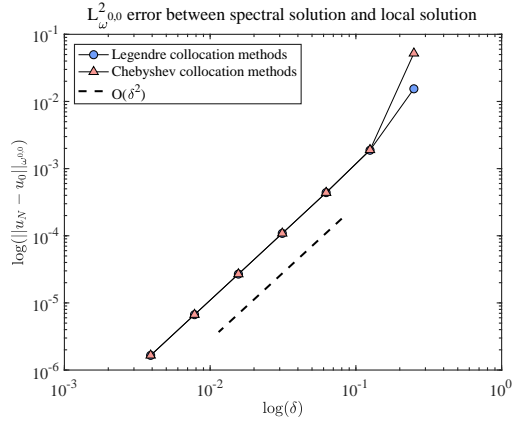}
\caption{Asymptotic compatibility verification of Jacobi collocation methods with $\delta=1/N\rightarrow0$.}
	\label{delta_convergence}
\end{figure}\par

\section{Conclusion}
In this paper, we develop a Jacobi spectral collocation method for weakly singular nonlocal diffusion equations with volume constraints. To reach high-order accuracy for the approximation of a weak singular nonlocal integral, two-sided Jacobi spectral quadrature rules are established, and the spectral rate of convergence for the proposed method is established with the $L^\infty$ norm. Moreover, we theoretically show that the discrete solution obtained by applying the Jacobi collocation method to a weakly singular nonlocal diffusion equation converges to the correct local limit as nonlocal interactions vanish. The numerical results are presented to verify the effectiveness and asymptotic compatibility of the proposed method. The possibility of extending such spectral collocation methods to nonlocal diffusion models in high dimensions will be investigated in future works.
\section*{Acknowledgments}
This research was supported by the National Natural Science Foundation of China (No. 11971386) and the National Key R\&D Program of China (No. 2020YFA0713603).
\bibliography{mybibfile}

\end{document}